\documentclass[12pt]{elsarticle}
 
\usepackage[margin=1in]{geometry}
\usepackage{natbib}
\usepackage{graphicx}
\usepackage{mathrsfs}
\usepackage{color}
\usepackage{amssymb}
\usepackage{amsfonts}
\usepackage{amsmath}
\usepackage{bbm}
\usepackage{nameref}
\usepackage{tikz}
\usepackage{stackengine}
\usetikzlibrary{bayesnet}
\usepackage{lineno}
\usetikzlibrary{decorations.pathreplacing}
\usepackage{setspace}

\usepackage{stackengine}

\DeclareFontFamily{U}{rcjhbltx}{}
\DeclareFontShape{U}{rcjhbltx}{m}{n}{<->rcjhbltx}{}
\DeclareSymbolFont{hebrewletters}{U}{rcjhbltx}{m}{n}
\newcommand{\norm}[1]{\left\lVert#1\right\rVert}

\let\aleph\relax\let\beth\relax
\let\gimel\relax\let\daleth\relax

\DeclareMathSymbol{\aleph}{\mathord}{hebrewletters}{39}
\DeclareMathSymbol{\beth}{\mathord}{hebrewletters}{98}
\DeclareMathSymbol{\gimel}{\mathord}{hebrewletters}{103}
\DeclareMathSymbol{\daleth}{\mathord}{hebrewletters}{100}
\DeclareMathSymbol{\lamed}{\mathord}{hebrewletters}{108}
\DeclareMathSymbol{\mem}{\mathord}{hebrewletters}{109}
\DeclareMathSymbol{\ayin}{\mathord}{hebrewletters}{96}
\DeclareMathSymbol{\tsadi}{\mathord}{hebrewletters}{118}
\DeclareMathSymbol{\qof}{\mathord}{hebrewletters}{113}
\DeclareMathSymbol{\shin}{\mathord}{hebrewletters}{152}

\usepackage[hidelinks]{hyperref}
\allowdisplaybreaks

\usepackage{subcaption}

\usepackage{algorithm}
\usepackage{algpseudocode}

\newtheorem{theorem}{Theorem}
\usepackage{amsthm}
\newtheorem{prop}{Proposition}
\theoremstyle{definition}
\newtheorem{definition}{Definition}
\newtheorem*{definition*}{Definition}
\newtheorem{lemma}[theorem]{Lemma}

\algtext*{EndFor}
\algtext*{EndIf}
\algtext*{EndFunction}
\algtext*{EndWhile}

\algrenewcommand\algorithmicforall{\textbf{foreach}}
\algrenewcommand\algorithmicindent{.8em}

\usepackage{placeins}

\begin{document}
\journal{}
\begin{frontmatter}

\title{Transit Frequency Setting Problem with Demand Uncertainty}

\author[mitcee]{Xiaotong Guo\corref{cor}}
\ead{xtguo@mit.edu} 
\author[mitcee]{Baichuan Mo}
\ead{baichuan@mit.edu} 
\author[neucee]{Haris N. Koutsopoulos}
\ead{h.koutsopoulos@northeastern.edu}
\author[mitdusp]{Shenhao Wang}
\ead{shenhao@mit.edu}
\author[mitdusp]{Jinhua Zhao\corref{cor}}
\ead{jinhua@mit.edu}

\address[mitcee]{Department of Civil and Environmental Engineering, Massachusetts Institute of Technology, Cambridge, MA 02139, USA}
\address[neucee]{Department of Civil and Environmental Engineering, Northeastern University, Boston, MA 02115}
\address[mitdusp]{Department of Urban Studies and Planning, Massachusetts Institute of Technology, Cambridge, MA 02139, USA}
\cortext[cor]{Corresponding author}

\begin{abstract}
Public transit systems are the backbone of urban mobility systems in the era of urbanization. The design of transit schedules is important for the efficient and sustainable operation of public transit. However, previous studies usually assume fixed demand patterns and ignore uncertainties in demand, which may generate transit schedules that are vulnerable to demand variations. To address demand uncertainty issues inherent in public transit systems, this paper adopts both stochastic programming (SP) and robust optimization (RO) techniques to generate robust transit schedules against demand uncertainty. A nominal (non-robust) optimization model for the transit frequency setting problem (TFSP) under a single transit line setting is first proposed. The model is then extended to SP-based and RO-based formulations to incorporate demand uncertainty. The large-scale origin-destination (OD) matrices for real-world transit problems make the optimization problems hard to solve. To efficiently generate robust transit schedules, a Transit Downsizing (TD) approach is proposed to reduce the dimensionality of the problem. We prove that the optimal objective function of the problem after TD is close to that of the original problem (i.e., the difference is bounded from above). The proposed models are tested with real-world transit lines and data from the Chicago Transit Authority (CTA). Compared to the current transit schedule implemented by CTA, the nominal TFSP model without considering demand uncertainty reduces passengers' wait times while increasing in-vehicle travel times. After incorporating demand uncertainty, both stochastic and robust TFSP models reduce passengers' wait times and in-vehicle travel times simultaneously. The robust TFSP model produces transit schedules with better in-vehicle travel times and worse wait times for passengers compared to the stochastic TFSP model.
\end{abstract}

\begin{keyword}
Transit Frequency Setting Problem; Demand Uncertainty; Robust Optimization; Stochastic Programming; Dimensionality Reduction
\end{keyword}

\end{frontmatter}


\section{Introduction}
\label{sec:intro}

The past century has witnessed one of the most dramatic evolution in human history, urbanization. More than half of the world now lives in urban areas. By 2050, over two-thirds of the world’s population is expected to live in urban areas~\cite{owidurbanization}. Urban mobility, defined as moving people from one place to another within or between urban areas, is critical to the functionality of people's daily lives in urban areas. It allows people to access housing, jobs, and recreational services. However, urban mobility is also the largest contributor to greenhouse gas emissions in the United States, accounting for over 27\% of the total greenhouse gas emissions~\cite{carbon_emission_us}. Therefore, an efficient and sustainable urban mobility system is necessary to support future urban development.

Although emerging urban mobility services, e.g., Ride-hailing and bike-sharing, have provided people with various options for traveling, public transit systems keep serving as the backbone of a sustainable urban mobility system, which allows more efficient travel across cities for a mass number of people. Meanwhile, public transit systems provide a more affordable travel option for everyone regardless of travel distances within cities. Hence, it is important to design a public transit system with a good level of service and operate it efficiently.  

The COVID-19 pandemic has imposed an enormous impact on public transit systems. The national public transportation ridership stays around 60\% of the pre-pandemic ridership level at the beginning of 2022~\cite{national_ridership}. One of the main driving forces for the ridership drop is the flexible or remote working adopted by many employers worldwide during the pandemic. However, remote working won't be a temporary strategy for companies because the US is projected to have an average of 30\% paid full days working from home for people in the future compared to a 5\% pre-pandemic level~\cite{NBERw28731}. Remote working implies that a proportion of commute trips in transit may be lost permanently, which motivates transit agencies to redesign their transit networks and schedules with the new demand patterns. Also, transit demand has become more volatile. Predicting future demand becomes more challenging.

While transit networks have been developed for years and are hard to change by transit agencies within a short period of time, changing transit schedules is straightforward. In this paper, we focus on the transit frequency setting problem (TFSP), where transit schedules are optimized given a set of transit stops to serve. Though TFSP has been explored in previous literature, there is a limited number of papers incorporating uncertainty (such as volatile demand) into consideration for the TFSP~\cite{Ibarra_2015}. Ignoring demand uncertainty when setting up transit schedules may diminish the level of service for transit systems.  

To handle demand uncertainty for transit systems, especially during the post-COVID era, we first propose a baseline TFSP model for a single transit line. Next, we introduce two techniques to incorporate uncertainty into decision-making processes: Stochastic Programming (SP) and Robust Optimization (RO). Furthermore, the Transit Downsizing (TD) approach is proposed to reduce problem dimensionality and generate optimal transit schedules efficiently. Also, the proposed TD approach can be utilized in other transit-related problems. Overall, the contribution of this paper can be summarized as follows:

\begin{itemize}
    \item Propose a nominal TFSP model for a single transit line and an extended TFSP model considering crowding levels.
    \item Address demand uncertainty issues by introducing a stochastic TFSP model considering multiple demand scenarios and a robust TFSP model which optimizes for the worst-case demand scenario.
    \item Design the TD approach to reduce problem sizes and make the model tractable given large-scale demand matrices from real-world transit instances. Theoretically prove that the optimal objective function of the problem after TD is close to that of the original problem (i.e., the difference is bounded from above)
    \item Compare the current transit schedule with the schedules solved by nominal, stochastic, and robust optimization, respectively, under multiple demand scenarios over two real-world transit lines (Routes 49 and X49) operated in Chicago. Both stochastic and robust optimization-based methods reduce passenger wait times and in-vehicle travel times simultaneously compared to the status quo. Compared to stochastic optimization, robust optimization reduces in-vehicle travel times while increasing wait times for passengers.
\end{itemize}

The remainder of the paper is organized as follows. 
Section \ref{sec:literature_review} reviews the relevant literature. 
Section \ref{sec:methodology} describes the nominal, stochastic and robust TFSP models and proposed dimensionality reduction algorithms.
Section \ref{sec:results} outlines experimental setups, including utilized data and transit lines, and displays experiment results and sensitivity analyses. 
Finally, Section \ref{sec:conclusion} recaps the main contributions of this work, outlines the limitations, and provides future research directions.

\section{Literature Review}
\label{sec:literature_review}

\subsection{Transit Frequency Setting Problem}

The design and planning of urban public transit systems consist of a series of decisions before operating the system, which is known as Transit Network Planning (TNP) problem. 
In literature, TNP is commonly divided into sub-problems that range across tactical, strategical, and operational decisions, including Transit Network Design (TND), Frequency Setting (FS), Transit Network Timetabling (TNT), Vehicle Scheduling Problem (VSP), Driver Scheduling Problem (DSP), Driver Rostering Problem (DRP). A thorough review of TNP and its sub-problems can be found in \cite{Ibarra_2015, Farahani2013, Ceder2007}.

The TFSP is defined as a problem to determine the number of trips for a given set of lines that provide a high level of service in a planning period. The TFSP is first studied by \citet{Newell1971} using analytic models. Given a fixed number of vehicles and constant passenger arrival rate, \citet{Newell1971} produced vehicle dispatching time in order to minimize the total waiting time of all passengers. He concluded that the optimal headway should be approximate as the square root of the arrival rate of passengers. His proposed model assumes fixed passenger demand and overlooks vehicle capacity constraints. 

\citet{Furth1981} formulated the TFSP as a non-linear program that computed the optimal headway for bus routes in order to maximize the net social benefits, consisting of ridership benefits and wait-time savings. Sets of constraints incorporated in their model were total subsidy, maximum fleet size, and acceptable level of loading. A key assumption they made was considering responsive demand which was a function of headway in the model. Furthermore, a heuristic-based algorithm was designed to solve non-linear programs.

More recently, \citet{Verbas2013} extended the model proposed by \citet{Furth1981} by incorporating service patterns into transit routes. A service pattern corresponds to a unique set of steps that need to be served by transit vehicles along a transit route. They formulated two non-linear optimization problems with different objectives: i) maximize the number of riders and wait time savings, and ii) minimize the net cost. Non-linear optimization solvers were directly used to solve non-linear programs. Additionally, \citet{Verbas2015} discussed the impact of demand elasticity over solutions from the TFSP which is similar to models proposed by \citet{Furth1981} and \citet{Verbas2013}. They introduced three methodologies for estimating demand elasticity within transit networks and solved TFSPs under multiple demand elasticity scenarios on a large-scale network. Although the impact of demand uncertainty is discussed in this paper, their proposed methods are not equipped with abilities to generate optimal schedules considering demand uncertainty explicitly.

One could argue that one of the modeling contributions in formulations based on \citet{Furth1981}'s model is the introduction of responsive demand. However, the authors claim that it is more reasonable to consider a fixed demand matrix when solving the TFSP. There are short-term and long-term objectives in the TFSP: i) minimizing wait times for existing passengers, and ii) attracting more passengers to use transit networks. Minimizing wait times for the existing passengers leads to an increase in the level of service, which in turn attracts more passengers to take transit. On the contrary, maximizing ridership when considering responsive demand could lead to a waste of resources since it takes weeks for demand to respond to service changes. Meanwhile, transit schedules are modified frequently in practice, e.g., Chicago Transit Authority (CTA) publishes new transit schedules quarterly. An updated demand matrix can be utilized when generating new transit schedules every time. Therefore, minimizing wait times for existing passengers is a better objective in the authors' opinion.

Although limited papers take demand uncertainty into consideration when setting transit frequencies, \citet{Li2013} 
utilized stochastic programming techniques to solve the headway optimization problem for a single bus route considering random passenger arrivals, boarding, alighting, and vehicle travel time. A metaheuristic algorithm consisting of a stochastic simulation and a genetic algorithm was designed to solve the proposed model. Their proposed approach was compared with three traditional headway determination models and bringing both demand and travel time uncertainty improved model performances. The main critique for \citet{Li2013}'s work is the lack of discussions on the optimality gap given a heuristic-based solution algorithm. In this paper, we will propose a stochastic TFSP model with a dimensionality reduction approach that can be solved optimally and efficiently. 


\subsection{Stochastic Programming, Robust Optimization, and Applications in Urban Mobility}

There are two widely-used approaches for decision-making under uncertainty in the Operations Research (OR) domain: SP~\cite{Birge_Louveaux_2011} and RO~\cite{Ben-Tal_2009}. For the SP approach, the most traditional method is Sample Average Approximation (SAA), where the true distributions over uncertain parameters are approximated by empirical distributions obtained from the data~\cite{Kleywegt_Shapiro_Homem-de-Mello_2002}. On the other hand, RO and its data-driven variants~\cite{Bertsimas2018} is another option to handle uncertain parameters effectively. The underlying idea for RO is to specify a range for an uncertain parameter, namely an uncertainty set, and optimize over the worst-case realizations given the bounded uncertainty set. The solution method for RO problems involves generating a deterministic equivalent, called the robust counterpart. A practical guide on RO can be found in \cite{Gorissen_2015}.

Urban mobility systems have various sources of uncertainty brought by human behaviors and environmental impacts (e.g., weather). Considering uncertainty when designing and operating urban mobility systems is crucial and necessary. There are several applications for applying SP or RO techniques to solve urban mobility problems. For transit systems, \citet{Yan2013} proposed a robust framework for solving the bus transit network design problem considering stochastic travel times. \citet{Mo2022} utilized the RO technique to solve the individual path recommendation problem under rail disruptions considering demand uncertainty. For shared mobility systems, \citet{GUO2021161} formulated a robust matching-integrated vehicle rebalancing (MIVR) model to balance vacant vehicles in the ride-hailing operations given demand uncertainty. \citet{GUO2022} extended the MIVR model proposed by \citet{GUO2021161} by introducing predictive prescriptions approach~\cite{Dimitris2020} to handle demand uncertainty, which is an advanced approach for handling data uncertainty based on the stochastic optimization framework.

\section{Methodology}
\label{sec:methodology}

\subsection{Basic Optimization Model}

We consider the TFSP for a single urban transit line (either rail or bus services) with a sequence of $N$ stops. Let the set of stops be $\mathcal{S}$. A single line is the basic element of a transit network. Future studies can be extended to the network-level design by considering potential interactions between different lines.
Without loss of generality, we assume each bi-directional transit line is considered as two separate transit lines with distinct sets of stops in this paper.
For an urban transit line, there exists a set of potential service patterns $\mathcal{P}$, where each pattern $p \in \mathcal{P}$ consists of a subset of stops $\mathcal{S}_p \subseteq \mathcal{S}$, indicating where the vehicles should stop if traveling with this pattern. Common examples of patterns are short-turnings and limited-stop lines in bus operations.

Let $\mathcal{V}$ represent the set of vehicle types that can be operated on the transit line. For instance, $\mathcal{V} = \{\text{standard bus},\; \text{articulated bus}, \; \text{minibus}\}$ includes three types of buses, and $\mathcal{V} = \{\text{four-car train},\; \text{six-car train}, \; \text{eight-car train}\}$ consists of three types of rail cars with a different number of carriages.
For each type of vehicle $v \in \mathcal{V}$, the number of seats is $C_v$ and the maximum vehicle capacity is $\Bar{C}_v$. 
Furthermore, we discretize the full planning period $[T_{start}, T_{end}]$ into time periods $t = 1,...,T$, where each time interval $t$ has the same length $\Delta$. 

Let \emph{passenger flow} $(o, d, t)$ stand for passengers with origin station (stop) $o \in \mathcal{S}$ and destination station (stop) $d \in \mathcal{S}$ who arrives at the boarding station (stop) $o$ at the beginning of time interval $t$.
The set of passenger flows is indicated by $\mathcal{F}$.
For each transit line, we have a demand matrix $\boldsymbol{u} = (u_t^{o,d})$, where $u_t^{o, d}$ indicates demand for the passengers flow $(o, d, t)$.
The decision variables for the TFSP are $\boldsymbol{x} = (x_t^{p, v})$, where $x_t^{p, v} = 1$ denotes a vehicle with type $v \in \mathcal{V}$ operating on a pattern $p \in \mathcal{P}$ departures from the terminal station of pattern $p$ at the beginning of time interval $t$. Hence, unlike typical headway-based design, this paper allows non-even dispatching of vehicles according to the service needs.

In real-world transit line operations, transit agencies usually have a limited number of operating patterns for each line due to practical constraints. 
Therefore, we impose a \emph{sparsity} constraint on operating patterns. Define an auxiliary decision variable $y_p, \forall p \in \mathcal{P}$, where $y_p=1$ indicates that the pattern $p$ can be operated on the transit line. 
Let $P$ represent the maximum number of patterns operated on a single transit line. The sparsity constraint can be formulated as 
\begin{subequations}
\label{eq:sparsity}
\begin{align}
    & x_t^{p,v} \leq y_p \quad \forall t=1,...,T, \forall p \in \mathcal{P}, \forall v \in \mathcal{V}, \\
    & \sum_{p \in \mathcal{P}} y_p \leq P.
\end{align}
\end{subequations}

Let $c^{p, v}$ stand for the cost parameter associated with operating a vehicle of type $v$ on a pattern $p$.
The budget for scheduling transit services over the transit line is represented by $B$.
The set of feasible schedules is denoted by 
\begin{equation}
\label{eq:feasible_x}
\small
    \mathcal{X}_B = \{ \boldsymbol{x} \in \{0, 1\}^{|\mathcal{P}| \times |\mathcal{V}| \times T}: \sum_{p \in \mathcal{P}} \sum_{v \in \mathcal{V}} \sum_{t = 1}^T c^{p, v} x_t^{p, v} \leq B; \; \sum_{v \in \mathcal{V}} x_t^{p, v} \leq 1, \; \forall t = 1,...,T, \forall p \in \mathcal{P};  \text{Constraints} (\ref{eq:sparsity})\}.
\end{equation}

The feasibility constraints in Equation (\ref{eq:feasible_x}) ensure that the total scheduled transit services do not exceed the budget $B$ and only one type of vehicle can be operated on each pattern during each time interval $t$\footnote{It is worth mentioning that multiple patterns are allowed to be operated within the same time period.}.
Equation (\ref{eq:feasible_x}) imposes a general budget constraint, which can be modified to incorporate more complicated cases. 
For instance, the budget constraint can be adapted to ensure a limited number of vehicles for each vehicle type:
$$\sum_{p \in \mathcal{P}} \sum_{t=1}^T x_t^{p, v} \leq B_v \quad \forall v \in \mathcal{V},$$
where $B_v$ is the number of available vehicles for each vehicle type $v$ and the cost parameter $c^{p, v} = 1, \forall p \in \mathcal{P}, \forall v \in \mathcal{V}$. Meanwhile, additional constraints can be added to incorporate agency-specific constraints. For example, $\sum_{v \in \mathcal{V}} x_t^{p, v} \geq 5$ implies that at least 5 buses need to be scheduled to operate with pattern $p$ during time $t$. 

For each passenger flow $(o, d, t)$, let $\mathcal{P}^{o,d} \subseteq \mathcal{P}$ denote the set of patterns that includes both stations $o$ and $d$. 
To capture boarding for passenger flows, we define decision variables $\boldsymbol{\lambda} = (\lambda_{t, \tau}^{o, d, p, v})$, where $\lambda_{t, \tau}^{o, d, p, v} \in \mathbb{R}_+$\footnote{We relax the integer variable $\boldsymbol{\lambda}$ to continuous variable to increase tractability for solving the problem while maintaining a satisfying model performance.} indicates the number of passengers in the passenger flow $(o, d, t)$ who board on a vehicle $v$ that departs at the first station of pattern $p$ at time $\tau$.

Let $w_{t, \tau}^{o, d, p, v}$ represent the waiting time for the passenger flow $(o, d, t)$ to board the vehicle $v$ which departs at the first station of the pattern $p$ at time $\tau$.
For passengers with an origin-destination pair $(o, d)$, the in-vehicle travel time for them to take a transit vehicle operating on pattern $p$ is represented as $\phi^{o,d,p}$.
Note that to maintain the linearity of the optimization model, we assume a pattern-specific fixed travel time\footnote{Can be extended to time-dependent travel time.}. Dwell times are also ignored in the model since they are generally small compared to in-vehicle times. Let $L_{\tau}^{p, v, s}$ stands for the vehicle load after visiting the station $s \in \mathcal{S}_p$ of vehicle $v$ which departures at the first station of the pattern $p$ at time $\tau$, i.e.,
\begin{equation}
\small
\label{eq:load_function}
    L_{\tau}^{p, v, s} = \sum_{o \in \mathcal{S}_p^{\text{before}}(s)} \sum_{d \in \mathcal{S}_p^{\text{after}}(s)} \sum_{t=1}^{T_{\tau,p}^{o,d}} \lambda_{t, \tau}^{o, d, p, v} \quad \forall p \in \mathcal{P}, \forall v \in \mathcal{V}, \forall s \in \mathcal{S}_p, \forall \tau = 1,...,T,
\end{equation}
where $\mathcal{S}_p^{\text{before}}(s), \mathcal{S}_p^{\text{after}}(s)$ indicate sets of stations in $\mathcal{S}_p$ which are before (include station $s$) and after the station $s$, respectively.
$T_{\tau,p}^{o,d}$ indicates the latest time interval such that a passenger with the origin-destination pair $(o,d)$ can board a transit vehicle that departs from the first station at the time $\tau$ with pattern $p$.

To guarantee the feasibility of the model, we introduce an auxiliary decision variable $\boldsymbol{\eta} = (\eta_{t}^{o,d} \geq 0)$ indicating the number of unsatisfied passenger flow $(o, d, t)$ (i.e., passengers who can not be served by the transit system). $\boldsymbol{\eta}$ serves as a slack variable to guarantee the problem always has feasible solutions. Hence, the flow conservation constraints can be represented as:
\begin{align} 
    \sum_{v \in \mathcal{V}} \sum_{p \in \mathcal{P}^{o, d}} \sum_{\tau=\tau_t^{o,d,p}}^T \lambda_{t, \tau}^{o, d, p, v} = u_t^{o,d} - \eta_t^{o,d} \quad \forall (o, d, t) \in \mathcal{F} \label{eq:flow_conservation}
\end{align}
where $\tau_t^{o, d, p}$ represents the earliest departure time for vehicles that are operated on a pattern $p \in \mathcal{P}^{o,d}$ and can be boarded by the passenger flow $(o, d, t)$. Eq. (\ref{eq:flow_conservation}) means that all passengers from a passenger flow will board vehicles or stay unsatisfied.

Then, we have the following Integer Linear Programming (ILP) formulation for setting optimal frequencies for urban transit lines:
\begin{subequations}
\small
\label{eq:basic_ILP}
\begin{align}
    (P) \quad \min_{\boldsymbol{x} \in \mathcal{X}_B, \boldsymbol{\lambda}, \boldsymbol{\eta}} \quad & \sum_{(o, d, t) \in \mathcal{F}} \sum_{v \in \mathcal{V}} \sum_{p \in \mathcal{P}^{o, d}} \sum_{\tau=\tau_t^{o,d,p}}^T  \left( w_{t, \tau}^{o, d, p, v} + \gamma \phi^{o,d,p} \right)  \lambda_{t, \tau}^{o, d, p, v} + M \sum_{(o, d, t) \in \mathcal{F}} \eta_{t}^{o,d} \\
    \text{s.t.} \quad
    & \text{Constraints} \; (\ref{eq:load_function}) \text{ and } (\ref{eq:flow_conservation})\notag \\
    & L_{\tau}^{p, v, s} \leq \Bar{C}_v  x_\tau^{p, v}  \quad \forall p \in \mathcal{P}, \forall v \in \mathcal{V}, \forall s \in \mathcal{S}_p, \forall \tau = 1,...,T;\\
    & \lambda_{t, \tau}^{o, d, p, v} \geq 0 \quad \forall (o, d, t) \in \mathcal{F}, \forall p \in \mathcal{P}, \forall v \in \mathcal{V}, \forall \tau = 1,...,T;\\
    & \eta_{t}^{o,d} \geq 0 \quad \forall (o,d,t) \in \mathcal{F}.
\end{align}
\end{subequations}

The objective function (\ref{eq:basic_ILP}a) minimizes the total generalized journey time for passengers who take transit services and the penalty of unsatisfied passenger flows.
$\gamma$ is a weight parameter controlling the importance between wait times and in-vehicle travel times. $\gamma = 0$ leads to a problem that only minimizes passengers' wait times and $\gamma = 1$ generates a problem that minimizes passengers' journey times (i.e., wait plus in-vehicle times). $M$ stands for a large number that dominates the objective function (\ref{eq:basic_ILP}a), indicating that all passenger flows should be served in the transit system. 
Constraints (\ref{eq:basic_ILP}b) guarantee that passenger loads on vehicles do not exceed the vehicle capacity.
Constraints (\ref{eq:basic_ILP}c) and (\ref{eq:basic_ILP}d) ensure that decision variables $\boldsymbol{\lambda}$ and $\boldsymbol{\eta}$ are non-negative.

\subsection{Optimization Model with Crowding Extension}
Passengers may have different comfort levels depending on the degree of crowding in a vehicle and whether they can have a seat or not. Also, the potential infection risks of COVID-19 require transit agencies to control the vehicle load. 
To grant the model the ability to control crowding levels on transit vehicles, we introduce a binary decision variable $\boldsymbol{z} = (z_t^{p,v,s})$, where $z_t^{p,v,s} = 1$ indicates the vehicle with type $v$ operating on a pattern $p$ and departing from the terminal at time $t$ is \emph{crowded} at the segment (consecutive station to station pair) after passing through station $s$. A transit vehicle $v$ is $\emph{crowded}$ if the passenger load on the vehicle is greater than the seated capacity $C_v$\footnote{When passenger loading exceeds seated capacity, the proportion of passengers must stand and standees perceive up to 2.25 times actual travel time~\cite{TRB_TCQS}.}. 

Let $\omega$ represent the penalty cost per unit of travel time of a crowded transit vehicle. For a vehicle operating on a pattern $p$, let $\phi^{p,s}$ denote the vehicle running time of the segment after passing through station $s$. The ILP with crowding extension can be formulated as follows:
\begin{subequations}
\small
\label{eq:crowding_ILP}
\begin{align}
    (P-C) \quad \min_{\boldsymbol{x} \in \mathcal{X}_B, \boldsymbol{\lambda}, \boldsymbol{\eta}, \boldsymbol{z}} \quad & \sum_{(o, d, t) \in \mathcal{F}} \sum_{v \in \mathcal{V}} \sum_{p \in \mathcal{P}^{o, d}} \sum_{\tau=\tau_t^{o,d,p}}^T  \left( w_{t, \tau}^{o, d, p, v} + \gamma \phi^{o,d,p} \right)  \lambda_{t, \tau}^{o, d, p, v} + M \sum_{(o, d, t) \in \mathcal{F}} \eta_{t}^{o,d} \notag \\ 
    & + \omega \sum_{p \in \mathcal{P}} \sum_{v \in \mathcal{V}} \sum_{s\in\mathcal{S}_p} \sum_{\tau=1}^{T} z_\tau^{p,v,s} \phi^{p,s}\\
    \text{s.t.} \quad
    & \text{Constraints} \; (\ref{eq:load_function}), (\ref{eq:flow_conservation}), (\ref{eq:basic_ILP}c), (\ref{eq:basic_ILP}d) \notag \\
    & L_{\tau}^{p, v, s} \leq C_v x_\tau^{p, v} + (\Bar{C}_v - C_v) z_\tau^{p, v,s} \quad \forall p \in \mathcal{P}, \forall v \in \mathcal{V}, \forall s \in \mathcal{S}_p, \forall \tau = 1,...,T;\\
    & z_{\tau}^{p,v,s} \leq x_{\tau}^{p,v} \quad \forall p \in \mathcal{P}, \forall v \in \mathcal{V}, \forall s \in \mathcal{S}_p, \forall \tau = 1,...,T; \\
    & z_\tau^{p,v,s} \in \{0, 1\} \quad \forall p \in \mathcal{P}, \forall v \in \mathcal{V}, \forall s \in \mathcal{S}_p, \forall \tau = 1,...,T.
\end{align}
\end{subequations}

Besides the objective for problem $(P)$, the crowding penalty for transit vehicles is also added to the objective function as (\ref{eq:crowding_ILP}a), which leads to a transit schedule and passenger boarding choices minimizing the crowding levels. 
When $\omega = 0$, the problem (\ref{eq:crowding_ILP}) is equivalent to the problem (\ref{eq:basic_ILP}), leading to transit schedules that minimize the total generalized journey time for passengers given passengers will board the first available transit vehicles.
When $\omega > 0$, we assume passengers can wait for the next transit vehicle in order to reduce the crowding levels. It is worth noting that, in reality, passengers may or may not board a crowded vehicle depending on their comfort level requirement \citep{mo2020capacity}. Our model simplifies the modeling of passengers' willingness to board and assumes that their boarding behavior minimizes the objective function. Hence, the objective function is a lower bound of the actual system cost. In this way, our model is useful for providing a perspective of system optimum and showing the trade-off between passengers' total waiting time and crowding levels in transit vehicles.
Constraints (\ref{eq:crowding_ILP}b) are the modified capacity constraints with crowding level. Constraints (\ref{eq:crowding_ILP}c) restrict that a vehicle can only be crowded if it is operated in the system. Constraints (\ref{eq:crowding_ILP}d) specify decision variable $\boldsymbol{z}$ is binary.

\subsection{Stochastic Programming Model Formulation}
\label{sybsec:stoc_prog}

The demand matrix $u_t^{o, d}$ in the problem $(P)$ is critical for the TFSP. However, existing literature usually assumes a constant demand matrix $\boldsymbol{u}$ estimated from the historical data. 
In this paper, we first introduce a stochastic TFSP model extended from the nominal TFSP model $(P)$ to design transit schedules considering multiple demand scenarios.

Given a set of demand scenarios $\mathcal{E}$, the corresponding demand matrix $\boldsymbol{u}_e$ for a demand scenario $e \in \mathcal{E}$ has probability $p_e$. By introducing demand scenarios into the frequency setting problem, we adjust the boarding decision variables for passengers to $\boldsymbol{\lambda}_e = (\lambda_{t, \tau, e}^{o,d,p,v})$ for each demand scenario $e \in \mathcal{E}$, where $\lambda_{t, \tau, e}^{o,d,p,v} \in \mathbb{R}_+$ represents the number of passengers in the passenger flow $(o, d, t)$ who board on a vehicle $v$ which departures at the beginning of pattern $p$ at time $\tau$ under demand scenario $e$. 
Similarly, auxiliary variables $\boldsymbol{\eta}$ are extended to $\boldsymbol{\eta}_e = (\eta_{t,e}^{o,d})$ for each demand scenario $e \in \mathcal{E}$. 
Then the stochastic TFSP model can be formulated as:
\begin{subequations}
\small
\label{eq:ILP_SP}
\begin{align}
    (SP) \quad \min_{\boldsymbol{x} \in \mathcal{X}_B, \boldsymbol{\lambda}, \boldsymbol{\eta}} \quad & \sum_{e \in \mathcal{E}} p_e \left[ \sum_{(o, d, t) \in \mathcal{F}} \sum_{v \in \mathcal{V}} \sum_{p \in \mathcal{P}^{o, d}} \sum_{\tau=\tau_t^{o,d,p}}^T  \left( w_{t, \tau}^{o, d, p, v} + \gamma \phi^{o,d,p} \right)  \lambda_{t, \tau, e}^{o, d, p, v} + M \sum_{(o, d, t) \in \mathcal{F}} \eta_{t,e}^{o,d} \right] \\
    \text{s.t.} \quad
    & L_{\tau, e}^{p, v, s} = \sum_{o \in \mathcal{S}_p^{\text{before}}(s)} \sum_{d \in \mathcal{S}_p^{\text{after}}(s)} \sum_{t=1}^{T_{\tau,p}^{o,d}} \lambda_{t, \tau, e}^{o, d, p, v} \quad \forall p \in \mathcal{P}, \forall v \in \mathcal{V}, \forall s \in \mathcal{S}_p, \forall \tau = 1,...,T, \forall e \in \mathcal{E}; \\
    & \sum_{v \in \mathcal{V}} \sum_{p \in \mathcal{P}^{o, d}} \sum_{\tau=\tau_t^{o,d,p}}^T \lambda_{t, \tau, e}^{o, d, p, v} = u_{t,e}^{o,d} - \eta_{t,e}^{o,d} \quad \forall (o, d, t) \in \mathcal{F}, \forall e \in \mathcal{E}; \\
    & L_{\tau, e}^{p, v, s} \leq \Bar{C}_v  x_\tau^{p, v}  \quad \forall p \in \mathcal{P}, \forall v \in \mathcal{V}, \forall s \in \mathcal{S}_p, \forall \tau = 1,...,T, \forall e \in \mathcal{E};\\
    & \lambda_{t, \tau, e}^{o, d, p, v} \geq 0 \quad \forall (o, d, t) \in \mathcal{F}, \forall p \in \mathcal{P}, \forall v \in \mathcal{V}, \forall \tau = 1,...,T, \forall e \in \mathcal{E};\\
    & \eta_{t, e}^{o,d} \geq 0 \quad \forall (o,d,t) \in \mathcal{F}, \forall e \in \mathcal{E}.
\end{align}
\end{subequations}

The problem $(SP)$ is a stochastic extension of the nominal optimization problem $(P)$, and we minimize the expected total generalized journey time and penalties induced by unsatisfied demand across all demand scenarios. The number of variables and constraints grows linearly regarding the number of demand scenarios $|\mathcal{E}|$.

\subsection{Robust Optimization Model Formulation}
\label{subsec:robust_opt}

Besides using SP to handle demand uncertainty when setting transit frequencies, RO~\cite{Ben-Tal_2009} is another approach widely used in literature for decision-making under uncertainty. Compared to SP where the generated transit schedules are optimal for an ``average'' demand scenario, RO produces transit schedules that are optimized against the worst-case demand scenario. The motivation for introducing RO into transit frequency setting is that transit operators would prefer no passengers suffer from excessive wait times given any demand scenarios.

To construct a robust TFSP model, we define an uncertainty set around the uncertain demand parameter $u_t^{o, d}$. The uncertainty set specifies a range for the uncertain demand $u_t^{o, d}$ where $u_t^{o, d}$ can change to any level within the range. Transit schedules are then generated using RO techniques with respect to the worst-case demand scenario in the uncertainty set. 

We adopted the budget uncertainty set introduced by \citet{bertsimas2004price}, which is widely used in literature, to quantify the demand uncertainty in the TFSP. Let $\mu_t^{o, d}, \sigma_t^{o, d}$ denote the mean and standard deviation of the demand of passenger flow $(o, d, t)$ derived from the historical data, respectively. The budget uncertainty set is defined as
\begin{equation}
\label{eq:budget_set}
    \mathcal{U}(\Gamma) = \left\{ \boldsymbol{u}: \left| \frac{u_t^{o,d} - \mu_t^{o, d}}{\sigma_t^{o, d}} \right| \leq 1, \forall (o,d,t) \in \mathcal{F}; \sum_{(o,d,t) \in \mathcal{F}} \left| \frac{u_t^{o,d} - \mu_t^{o, d}}{\sigma_t^{o, d}} \right| \leq \Gamma \right\},
\end{equation}

where $\Gamma$ is a parameter controlling the level of uncertainty for the budget uncertainty set. 
The budget uncertainty set implies that the demand can deviate from its historical average by at most one standard deviation, and the total absolute deviations for all passenger flows is upper-bounded by $\Gamma$. 
Define an uncertain parameter $\boldsymbol{\zeta} \in \mathbb{R}^{|\mathcal{F}|}$ and let $u_t^{o, d} = \mu_t^{o, d} + \sigma_t^{o, d} \zeta_t^{o, d}$. We have the following reformulated uncertainty set:
\begin{equation}
\label{eq:budget_set}
    \mathcal{U}(\Gamma) = \left\{ \boldsymbol{\zeta}: \norm{\boldsymbol{\zeta}}_{\infty} \leq 1, \norm{\boldsymbol{\zeta}}_{1} \leq \Gamma \right\}.
\end{equation}

With the defined uncertainty set over demand vector $\boldsymbol{u}$, we propose the robust TFSP model:
\begin{subequations}
\small
\label{eq:robust_model}
\begin{align}
    (RO) \quad \min_{\boldsymbol{x} \in \mathcal{X}_B, \boldsymbol{\lambda}, \boldsymbol{\eta}} \quad & \sum_{(o, d, t) \in \mathcal{F}} \sum_{v \in \mathcal{V}} \sum_{p \in \mathcal{P}^{o, d}} \sum_{\tau=\tau_t^{o,d,p}}^T \left( w_{t, \tau}^{o, d, p, v} + \gamma \phi^{o, d, p} \right) \lambda_{t, \tau}^{o, d, p, v} + M \sum_{(o, d, t) \in \mathcal{F}} \eta_{t}^{o,d} \\
    \text{s.t.} \quad
    & \sum_{v \in \mathcal{V}} \sum_{p \in \mathcal{P}^{o, d}} \sum_{\tau=\tau_t^{o,d,p}}^T \lambda_{t, \tau}^{o, d, p, v} = \mu_t^{o, d} + \sigma_t^{o, d} \zeta_t^{o, d} - \eta_{t}^{o,d} \quad \forall (o, d, t) \in \mathcal{F}, \forall \boldsymbol{\zeta} \in \mathcal{U}(\Gamma); \\
    & \sum_{o \in \mathcal{S}_p^{\text{before}}(s)} \sum_{d \in \mathcal{S}_p^{\text{after}}(s)} \sum_{t=1}^{T_{\tau,p}^{o,d}} \lambda_{t, \tau}^{o, d, p, v} \leq \Bar{C}_v x_\tau^{p, v} \quad \forall p \in \mathcal{P}, \forall v \in \mathcal{V}, \forall s \in \mathcal{S}_p, \forall \tau = 1,...,T; \\
    & \lambda_{t, \tau}^{o, d, p, v} \geq 0 \quad \forall (o, d, t) \in \mathcal{F}, \forall p \in \mathcal{P}, \forall v \in \mathcal{V}, \forall \tau = 1,...,T; \\
    & \eta_t^{o,d} \geq 0 \quad \forall (o,d,t) \in \mathcal{F}.
\end{align}
\end{subequations}

Constraints (\ref{eq:robust_model}b) in problem $(RO)$ are equality constraints with uncertain parameters which often restrict the feasibility region drastically or even lead to infeasibility~\cite{Gorissen_2015}. Therefore, we eliminate variables $\eta_{t}^{o,d}$ via substitution. Equality constraints (\ref{eq:robust_model}b) can be reformulated as
\begin{equation}
\label{eq:reformulation_eta}
\small
    \eta_{t}^{o,d} = \mu_t^{o, d} + \sigma_t^{o, d} \zeta_t^{o, d} - \sum_{v \in \mathcal{V}} \sum_{p \in \mathcal{P}^{o, d}} \sum_{\tau=\tau_t^{o,d,p}}^T \lambda_{t, \tau}^{o, d, p, v} \quad \forall (o, d, t) \in \mathcal{F}, \forall \boldsymbol{\zeta} \in \mathcal{U}(\Gamma).
\end{equation}

Substituting Constraints (\ref{eq:reformulation_eta}) into the objective function (\ref{eq:robust_model}a) and introducing a dummy variable $\alpha$ transform problem $(RO)$ into a problem formulation without equality constraints:
\begin{subequations}
\small
\label{eq:robust_model_2}
\begin{align}
    (RO') \quad \min_{\boldsymbol{x} \in \mathcal{X}_B, \boldsymbol{\lambda}} \quad & \alpha \\
    \text{s.t.} \quad
    & \sum_{(o, d, t) \in \mathcal{F}} \sum_{v \in \mathcal{V}} \sum_{p \in \mathcal{P}^{o, d}} \sum_{\tau=\tau_t^{o,d,p}}^T \left( w_{t, \tau}^{o, d, p, v} + \gamma \phi^{o, d, p} \right) \lambda_{t, \tau}^{o, d, p, v} + M \sum_{(o, d, t) \in \mathcal{F}} \left( \mu_t^{o, d} + \sigma_t^{o, d} \zeta_t^{o, d} \right) \notag \\ 
    & - M \sum_{(o, d, t) \in \mathcal{F}} \sum_{v \in \mathcal{V}} \sum_{p \in \mathcal{P}^{o, d}} \sum_{\tau=\tau_t^{o,d,p}}^T \lambda_{t, \tau}^{o, d, p, v} \leq  \alpha \quad \forall \boldsymbol{\zeta} \in \mathcal{U}(\Gamma); \\
    & \sum_{o \in \mathcal{S}_p^{\text{before}}(s)} \sum_{d \in \mathcal{S}_p^{\text{after}}(s)} \sum_{t=1}^{T_{\tau,p}^{o,d}} \lambda_{t, \tau}^{o, d, p, v} \leq \Bar{C}_v x_\tau^{p, v} \quad \forall p \in \mathcal{P}, \forall v \in \mathcal{V}, \forall s \in \mathcal{S}_p, \forall \tau = 1,...,T; \\
    & \mu_t^{o, d} + \sigma_t^{o, d} \zeta_t^{o, d} - \sum_{v \in \mathcal{V}} \sum_{p \in \mathcal{P}^{o, d}} \sum_{\tau=\tau_t^{o,d,p}}^T \lambda_{t, \tau}^{o, d, p, v} \geq 0 \quad \forall (o, d, t) \in \mathcal{F}, \forall \boldsymbol{\zeta} \in \mathcal{U}(\Gamma); \\
    & \lambda_{t, \tau}^{o, d, p, v} \geq 0 \quad \forall (o, d, t) \in \mathcal{F}, \forall p \in \mathcal{P}, \forall v \in \mathcal{V}, \forall \tau = 1,...,T.
\end{align}
\end{subequations}

However, equivalent formulations do not necessarily lead to equivalent robust counterparts, which are solvable reformulations of robust optimization problems. To guarantee an identical robust counterpart, the substituted variable $\boldsymbol{\eta}$ needs to be \emph{adaptive}, meaning that $\boldsymbol{\eta}(\boldsymbol{\zeta})$ becomes a function of uncertain parameter $\boldsymbol{\zeta}$. Linear Decision Rules (LDRs) are a commonly-used approximation method in literature to handle adaptive robust optimization problems~\cite{Ben-Tal_2009, Bertsimas2020}, which achieve satisfying performances in practice. \citet{Gorissen_2015} suggests that making uncertain variables adaptive and applying LDRs is equivalent to eliminating these variables, given coefficients of such variables do not include uncertain parameters and equality constraints are linear in uncertain parameters. Therefore, our reformulated robust optimization problem $(RO')$ is an approximated formulation of the original robust formulation $(RO)$, which are more tractable to solve without equality constraints.
 
To derive the robust counterpart of problem $(RO')$, which is a solvable formulation of the robust model, the following lemma is introduced~\cite{Bertsimas2020}.
\begin{lemma}
\label{lemma:budget_RC}
    For a constraint $$(\Bar{\boldsymbol{a}} + \boldsymbol{P}\boldsymbol{z})^T\boldsymbol{ x} \leq b, \quad \forall \boldsymbol{z}:\norm{\boldsymbol{z}}_{\infty}\leq \rho, \norm{\boldsymbol{z}}_{1} \leq \Gamma,$$
    it is satisfied by $\boldsymbol{x}$ if and only if there exists an auxiliary variable $\boldsymbol{y}$ such that $(\boldsymbol{x}, \boldsymbol{y})$ satisfies
    $$\Bar{\boldsymbol{a}}^T + \rho \norm{\boldsymbol{y}}_1 + \Gamma \norm{\boldsymbol{P}^T\boldsymbol{x} - \boldsymbol{y}}_{\infty} \leq b.$$
\end{lemma}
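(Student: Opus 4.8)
The plan is to recognize the left-hand side constraint as a semi-infinite constraint over the uncertainty set $\mathcal{Z} = \{\boldsymbol{z} : \norm{\boldsymbol{z}}_\infty \le \rho,\ \norm{\boldsymbol{z}}_1 \le \Gamma\}$ and to dualize the inner worst-case problem. First I would rewrite the constraint as
\begin{equation*}
\Bar{\boldsymbol{a}}^T \boldsymbol{x} + \max_{\boldsymbol{z} \in \mathcal{Z}} \ (\boldsymbol{P}^T\boldsymbol{x})^T \boldsymbol{z} \ \le \ b,
\end{equation*}
so that everything reduces to computing the support function of $\mathcal{Z}$ evaluated at the vector $\boldsymbol{c} := \boldsymbol{P}^T\boldsymbol{x}$, namely $\delta^*(\boldsymbol{c} \mid \mathcal{Z}) = \max\{\boldsymbol{c}^T\boldsymbol{z} : \norm{\boldsymbol{z}}_\infty \le \rho,\ \norm{\boldsymbol{z}}_1 \le \Gamma\}$. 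The set $\mathcal{Z}$ is the intersection of two norm balls (hence nonempty, convex, compact, and containing the origin in its interior whenever $\rho,\Gamma>0$), so strong conic/LP duality applies to this inner maximization with no gap.

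Next I would dualize. Writing $\boldsymbol{z} = \boldsymbol{z}^+ - \boldsymbol{z}^-$ or, more directly, attaching a multiplier $\boldsymbol{y} \in \mathbb{R}^n$ to an auxiliary copy of $\boldsymbol{z}$ one can split the two norm constraints: the $\norm{\cdot}_1 \le \Gamma$ constraint, being dual to $\norm{\cdot}_\infty$, contributes the term $\Gamma\norm{\cdot}_\infty$, and the $\norm{\cdot}_\infty \le \rho$ constraint, being dual to $\norm{\cdot}_1$, contributes $\rho\norm{\cdot}_1$; the free variable $\boldsymbol{y}$ is precisely the way the coupling vector $\boldsymbol{c}$ gets divided between the two pieces. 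Carrying out the Lagrangian dual (or equivalently using the fact that the support function of an intersection is the infimal convolution of the individual support functions, $\delta^*(\boldsymbol{c}\mid A\cap B) = \inf_{\boldsymbol{y}} \big[\delta^*(\boldsymbol{y}\mid A) + \delta^*(\boldsymbol{c}-\boldsymbol{y}\mid B)\big]$ when $0 \in \mathrm{int}(A)\cap\mathrm{int}(B)$) yields
\begin{equation*}
\max_{\boldsymbol{z}\in\mathcal{Z}} \boldsymbol{c}^T\boldsymbol{z} \ = \ \min_{\boldsymbol{y}} \ \Big( \rho\norm{\boldsymbol{y}}_1 + \Gamma\norm{\boldsymbol{c}-\boldsymbol{y}}_\infty \Big).
\end{equation*}
Substituting $\boldsymbol{c} = \boldsymbol{P}^T\boldsymbol{x}$ and plugging back into the constraint, the semi-infinite constraint holds for $\boldsymbol{x}$ if and only if the minimum over $\boldsymbol{y}$ of $\Bar{\boldsymbol{a}}^T\boldsymbol{x} + \rho\norm{\boldsymbol{y}}_1 + \Gamma\norm{\boldsymbol{P}^T\boldsymbol{x}-\boldsymbol{y}}_\infty$ is at most $b$, which is exactly the assertion that \emph{there exists} $\boldsymbol{y}$ with $\Bar{\boldsymbol{a}}^T\boldsymbol{x} + \rho\norm{\boldsymbol{y}}_1 + \Gamma\norm{\boldsymbol{P}^T\boldsymbol{x}-\boldsymbol{y}}_\infty \le b$. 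Both directions follow: the ``if'' direction is immediate weak duality (any feasible $\boldsymbol{y}$ gives an upper bound on the worst case), and the ``only if'' direction uses strong duality to produce the certifying $\boldsymbol{y}$.

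The main obstacle is getting the duality step airtight: one must justify that strong duality holds (Slater-type condition, guaranteed by $0 \in \mathrm{int}\,\mathcal{Z}$ and compactness) so that the $\min$ is attained and equals the $\max$, and one must carry out the dual derivation cleanly so that the two norm terms come out with the correct constants $\rho$ and $\Gamma$ attached to $\norm{\boldsymbol{y}}_1$ and $\norm{\boldsymbol{P}^T\boldsymbol{x}-\boldsymbol{y}}_\infty$ respectively (it is easy to swap them). One clean way to avoid sign bookkeeping is to invoke the infimal-convolution formula for support functions of intersections directly, citing a standard convex-analysis reference, together with the elementary facts $\delta^*(\cdot \mid \{\norm{\boldsymbol{z}}_\infty \le \rho\}) = \rho\norm{\cdot}_1$ and $\delta^*(\cdot \mid \{\norm{\boldsymbol{z}}_1 \le \Gamma\}) = \Gamma\norm{\cdot}_\infty$; then the lemma is essentially a one-line substitution. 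Since the statement is quoted from \citet{Bertsimas2020}, it would also be legitimate simply to cite it, but the self-contained dualization above is short enough to include.
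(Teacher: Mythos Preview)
Your proof is correct and is precisely the standard derivation of this robust counterpart via convex duality (support function of an intersection equals the infimal convolution of the support functions). There is nothing to compare against in the paper itself: the authors do not prove Lemma~\ref{lemma:budget_RC} but simply quote it from \citet{Bertsimas2020} and apply it. Your self-contained argument is therefore strictly more than what the paper offers; the only small remark is that since the inner problem is a finite linear program (after expanding the $\ell_1$ and $\ell_\infty$ constraints), LP strong duality already suffices and you do not actually need the interiority condition $\rho,\Gamma>0$, which handles the degenerate cases you flagged.
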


By applying Lemma \ref{lemma:budget_RC} to constraints (\ref{eq:robust_model}b) and linearizing the problem, we can derive the robust counterpart for the problem $(RO')$:
\begin{subequations}
\small
\label{eq:robust_counterpart}
\begin{align}
    (RC) \quad \min_{\boldsymbol{x} \in \mathcal{X}_B, \boldsymbol{\lambda}, \boldsymbol{\nu}} \quad & \alpha \\
    \text{s.t.} \quad
    & \sum_{(o, d, t) \in \mathcal{F}} \sum_{v \in \mathcal{V}} \sum_{p \in \mathcal{P}^{o, d}} \sum_{\tau=\tau_t^{o,d,p}}^T \left( w_{t, \tau}^{o, d, p, v} + \gamma \phi^{o, d, p} \right) \lambda_{t, \tau}^{o, d, p, v} + M \sum_{(o, d, t) \in \mathcal{F}} \mu_t^{o, d} \notag \\ 
    & - M \sum_{(o, d, t) \in \mathcal{F}} \sum_{v \in \mathcal{V}} \sum_{p \in \mathcal{P}^{o, d}} \sum_{\tau=\tau_t^{o,d,p}}^T \lambda_{t, \tau}^{o, d, p, v} + \sum_{(o, d, t) \in \mathcal{F}} \nu^{o,d,t,1} + \Gamma \nu^2 \leq  \alpha; \\
    & \nu^{o,d,t,1} + \Gamma \nu^2 \geq M \sigma_t^{o,d} \quad \forall (o,d,t) \in \mathcal{F}; \\
    & \nu^{o,d,t,1} + \Gamma \nu^2 \geq - M \sigma_t^{o,d} \quad \forall (o,d,t) \in \mathcal{F}; \\
    & \nu^{o,d,t,1} \geq 0 \quad \forall (o,d,t) \in \mathcal{F}; \\
    & \nu^2 \geq 0; \\ 
    & \sum_{(o', d', t') \in \mathcal{F}} \nu_{o',d',t'}^{o,d,t,3} + \nu^{o,d,t,4} \leq \mu_t^{o, d} - \sum_{v \in \mathcal{V}} \sum_{p \in \mathcal{P}^{o, d}} \sum_{\tau=\tau_t^{o,d,p}}^T \lambda_{t, \tau}^{o, d, p, v} \quad \forall (o, d, t) \in \mathcal{F}; \\
    & \nu_{o,d,t}^{o,d,t,3} + \nu^{o,d,t,4} \geq \sigma_t^{o,d} \quad \forall (o,d,t) \in \mathcal{F}; \\
    & \nu_{o',d',t'}^{o,d,t,3} + \nu^{o,d,t,4} \geq 0 \quad \forall (o',d',t') \neq (o,d,t) \in \mathcal{F}; \\
    & \nu_{o',d',t'}^{o,d,t,3} \geq 0 \quad \forall (o',d',t'), (o,d,t) \in \mathcal{F}; \\
    & \nu^{o,d,t,4} \geq 0 \quad \forall (o,d,t) \in \mathcal{F}; \\
    & \sum_{o \in \mathcal{S}_p^{\text{before}}(s)} \sum_{d \in \mathcal{S}_p^{\text{after}}(s)} \sum_{t=1}^{T_{\tau,p}^{o,d}} \lambda_{t, \tau}^{o, d, p, v} \leq \Bar{C}_v x_\tau^{p, v} \quad \forall p \in \mathcal{P}, \forall v \in \mathcal{V}, \forall s \in \mathcal{S}_p, \forall \tau = 1,...,T; \\
    & \lambda_{t, \tau}^{o, d, p, v} \geq 0 \quad \forall (o, d, t) \in \mathcal{F}, \forall p \in \mathcal{P}, \forall v \in \mathcal{V}, \forall \tau = 1,...,T.
\end{align}
\end{subequations}

Constraints (\ref{eq:robust_counterpart}b) - (\ref{eq:robust_counterpart}f) are the robust counterpart corresponds to constraints (\ref{eq:robust_model}b) while constraints (\ref{eq:robust_counterpart}g) - (\ref{eq:robust_counterpart}k) are the robust counterpart corresponds to constraints (\ref{eq:robust_model}d).
Compared to problem $(RO')$, the robust counterpart $(RC)$ introduces $(|\mathcal{F}|^2 + 2|\mathcal{F}| + 1)$ additional auxiliary non-negative continuous variables and $(|\mathcal{F}|^2 + 2|\mathcal{F}|)$ additional inequality constraints.
When the number of distinct passenger flows $|\mathcal{F}|$ is not large (e.g., blow 1,000), the robust counterpart $(RC)$ can be directly solved by off-the-shelf ILP solvers. However, the problem $(RC)$ can be intractable when $|\mathcal{F}|$ is large (e.g., above 10,000). In the next section, we will discuss the scalability issues for 
the TFSP under a single-line context and propose methods to handle large-scale TFSPs.

\subsection{Optimization with Large-Scale Demand Matrix}

In this section, we propose the Transit Downsizing (TD) approach to reduce the problem dimensionality and increase the tractability for proposed TFSP models given a large-scale demand matrix. As complexity issues are inherent in real-world transit problems, the proposed TD approach can be generalized to other design and operation problems in transit systems.

Take a bus line and a rail line operated by CTA for instance. The inbound direction of the CTA Blue line includes 33 stations in total, which leads to $528$ distinct OD pairs for passengers. When solving the transit frequency setting problem under a one-hour time interval with 12 decision time periods of length $\Delta = 5$ min, the number of passenger flows is $|\mathcal{F}| = 6,336$. Formulating the robust counterpart $(RC)$ introduces $40,157,569$ new continuous variables, which is a large-scale problem but might still be able to solve.

On the other hand, the northbound direction of the CTA route 49 bus contains $82$ stops overall, which gives $1,176$ distinct OD pairs for passengers. Under the same setting as the Blue line, there will be $14,112$ unique passenger flows and the robust counterpart 
$(RC)$ introduces $199,176,768$ new continuous variables. The problem becomes intractable due to the excessive problem size. These two instances imply that large-scale demand matrices commonly exist in practice. Methods need to be designed to reduce the size of demand matrices in robust transit frequency setting problems.  

The TD approach consists of two components: i) an optimality-preserved dimensionality reduction component, and ii) a heuristic-based dimensionality reduction component.
The optimality-preserved component is proposed to reduce demand matrices based on the following observation: transit demand matrices are \emph{sparse} and only a subset of passenger flows are chosen by passengers. Passengers using transit services have clear spatial and temporal patterns, which lead to sparsity in demand matrices. 

\begin{prop}
\label{prop:sparsity}
    For the nominal TFSP model $(P)$ with a demand matrix $\boldsymbol{u}$, it is equivalent to solving the problem with a reduced set of passenger flow $\Bar{\mathcal{F}}$, where $\Bar{\mathcal{F}}$ only contains passenger flows with positive demand, i.e., $\Bar{\mathcal{F}} = \{ (o,d,t): u_t^{o,d} > 0 \}$.
\end{prop}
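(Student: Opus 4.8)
The plan is to show that every variable attached to a zero-demand passenger flow is forced to vanish in every feasible solution of $(P)$, so removing those flows leaves the optimization problem essentially unchanged.

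First I would fix an arbitrary passenger flow $(o,d,t) \in \mathcal{F} \setminus \bar{\mathcal{F}}$, i.e.\ one with $u_t^{o,d} = 0$. Its flow-conservation constraint (\ref{eq:flow_conservation}) then reads $\sum_{v \in \mathcal{V}} \sum_{p \in \mathcal{P}^{o,d}} \sum_{\tau = \tau_t^{o,d,p}}^{T} \lambda_{t,\tau}^{o,d,p,v} = - \eta_t^{o,d}$. Since all $\lambda_{t,\tau}^{o,d,p,v} \ge 0$ by (\ref{eq:basic_ILP}c) and $\eta_t^{o,d} \ge 0$ by (\ref{eq:basic_ILP}d), the left-hand side is nonnegative while the right-hand side is nonpositive, which forces $\eta_t^{o,d} = 0$ and $\lambda_{t,\tau}^{o,d,p,v} = 0$ for every $p, v, \tau$. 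Thus all variables indexed by flows outside $\bar{\mathcal{F}}$ are identically zero on the feasible region and may be eliminated.

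Next I would verify that eliminating them does not disturb any other part of the model. The variables $\lambda_{t,\tau}^{o,d,p,v}$ with $(o,d,t) \notin \bar{\mathcal{F}}$ occur only: (i) in the objective (\ref{eq:basic_ILP}a), where, being zero, they contribute nothing; (ii) in the load definition (\ref{eq:load_function}), where they enter the double sum over origins and destinations purely additively, so a zero value simply drops out of every $L_\tau^{p,v,s}$; and hence (iii) in the capacity constraints (\ref{eq:basic_ILP}b) only through those unchanged loads. The schedule variables and the set $\mathcal{X}_B$ of (\ref{eq:feasible_x}) do not depend on $\mathcal{F}$ at all, and each remaining flow-conservation constraint (\ref{eq:flow_conservation}) for a flow in $\bar{\mathcal{F}}$ involves only its own $\boldsymbol{\lambda}$ and $\boldsymbol{\eta}$ entries. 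Consequently, restricting any feasible (resp.\ optimal) solution of $(P)$ to the variables indexed by $\bar{\mathcal{F}}$ yields a feasible (resp.\ optimal) solution of the reduced problem with the same objective value, and conversely padding any reduced-problem solution with zeros yields a feasible solution of $(P)$ of the same objective value. Hence the two problems share the same optimal value and their optimal solution sets correspond one-to-one up to the forced zeros.

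The argument carries no genuine analytical difficulty; the only point requiring care is the bookkeeping in the second step — checking that every occurrence of a deleted $\lambda$ variable is either in the objective or enters a constraint purely additively, so that fixing it to zero neither tightens nor relaxes any constraint. I would also remark that the identical reasoning applies verbatim to the crowding, stochastic, and robust variants, since they all inherit the same flow-conservation-plus-nonnegativity structure; this exactness is precisely what makes the reduction \emph{optimality-preserved} rather than heuristic.
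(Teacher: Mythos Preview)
Your proposal is correct and follows the same basic idea as the paper's proof: show that zero-demand flows contribute nothing and can be dropped. Your version is in fact slightly sharper than the paper's, which merely asserts that $\lambda_{t,\tau}^{o,d,p,v}=0$ ``in the optimal solution given a minimization problem''; you instead observe that the flow-conservation equality together with the nonnegativity of $\boldsymbol{\lambda}$ and $\boldsymbol{\eta}$ forces $\lambda_{t,\tau}^{o,d,p,v}=\eta_t^{o,d}=0$ on the entire feasible set, not just at optimality, and you then carry out the bookkeeping (objective, loads, capacity) that the paper leaves implicit. The approaches are the same in spirit; yours is simply more rigorous.
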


\begin{proof}
For a passenger flow $(o,d,t)$, when the demand is zero, i.e., $u_t^{o,d} = 0$, constraints (\ref{eq:flow_conservation}) ensure that $\lambda_{t,\tau}^{o,d,p,v} = 0, \forall \tau=1,...,T, p \in \mathcal{P}, v \in \mathcal{V}$, in the optimal solution given a minimization problem. Therefore, we can reach the same optimal solution by only considering passenger flows $\Bar{\mathcal{F}}$ with positive demand only, i.e., $\Bar{\mathcal{F}} = \{ (o,d,t): u_t^{o,d} > 0 \}$.
\end{proof}

Proposition \ref{prop:sparsity} reduces the problem size of the nominal TFSP model $(P)$ and $(P-C)$. It can also be applied to stochastic formulation $(SP)$ and robust formulation $(RO)$. For the stochastic TFSP model $(SP)$, each demand scenario $e \in \mathcal{E}$ with demand matrix $\boldsymbol{u}_e$ leads to a reduced passenger flow set $\Bar{\mathcal{F}_e}$, i.e., $\Bar{\mathcal{F}}_e = \{ (o,d,t): u_{t,e}^{o,d} > 0 \}$. For the robust TFSP model $(RO)$, the reduced passenger flow set $\Bar{\mathcal{F}}$ is constructed based on mean demand $\boldsymbol{\mu}$, i.e., $\Bar{\mathcal{F}} = \{ (o,d,t): \mu_t^{o,d} > 0 \}$.

The optimality-preserved component of the TD approach is extremely effective when solving nominal and stochastic models, where reduced passenger flow sets are established based on daily demand. When applying it to the robust problem with the average demand $\boldsymbol{\mu}$, the approach becomes less effective because the number of non-zero mean demand is still large. Considering the demand data from one month, a passenger flow $(o,d,t)$ has to be incorporated in $\Bar{\mathcal{F}}$ if it has demand for at least one day. We utilize a probabilistic scenario to better explain this issue. If a passenger flow $(o,d,t)$ has a 90\% probability to have zero demand in one day, the probability of not having a positive mean demand for 30 days is $0.9^{30} = 4.24\%$. When considering a month of demand data, the probability of excluding the passenger flow $(o,d,t)$ from the problem shrinks from $90\%$ to $4.24\%$, indicating that the first component of the TD approach is not effective for robust problems when considering demand data across multiple days. 

Therefore, a heuristic-based dimensionality reduction component of the TD approach is further proposed to reduce the problem size of robust TFSP model $(RO)$. It is constructed based on the following observation: if a passenger flow $(o, d, t)$ only appears once in a long period of time (e.g., one month), it is reasonable to exclude it from setting transit schedules given the same passenger flow $(o, d, t)$ will most likely not be seen again in the future. The heuristic-based component introduces an adjusted passenger flow set $\Tilde{\mathcal{F}} = \{(o,d,t): \mu_t^{o,d} > \epsilon \}$, where passenger flows with mean demand below or equal to $\epsilon$ will be excluded from the optimization model. 

The new problem after TD has a smaller scale and can be solved efficiently in practice. Compared to the original problem, the new problem has less number of constraints (i.e., a larger feasible space). Hence, its optimal objective function will be better (i.e., smaller in the minimization context). In the following analysis, we show that the difference between the objective functions of the new and original problems is bounded. The bound is a function of $\epsilon$. A smaller value of $\epsilon$ implies a tighter bound.

Define $Z^*(\mathcal{F})$ as the optimal objective function of the robust TFSP model $(RO)$ (Eq. \ref{eq:robust_model}) with passenger flow set $\mathcal{F}$. Then the optimal objective function of the problem after TD can be represented as $Z^*(\Tilde{\mathcal{F}})$. We have the following lemma:

\begin{lemma} \label{lemma:unit_loss_bound}
    For any given passenger flow set $\mathcal{F}_1$, define $\mathcal{F}_2$ as the passenger flow set by eliminating one passenger flow tuple $(o,d,t)$ (i.e., $|\mathcal{F}_1|$ - $|\mathcal{F}_2|$ = 1). Then, we have:
    \begin{align}
        Z^*(\mathcal{F}_1) - Z^*(\mathcal{F}_2) \leq 2M \cdot \ell
    \end{align}
    where $\ell = \max_{(o,d,t) \in \mathcal{F}}(\mu_t^{o,d} + \sigma_t^{o,d})$.
\end{lemma}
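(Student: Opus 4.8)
The plan is to sandwich the difference between the two optimal values by exhibiting, in each direction, an explicit feasible point for one problem built from an optimal solution of the other. Only the upper estimate $Z^*(\mathcal{F}_1) - Z^*(\mathcal{F}_2) \le 2M\ell$ is strictly needed, but it is natural to record the easy fact $Z^*(\mathcal{F}_1) \ge Z^*(\mathcal{F}_2)$ as well, since it is precisely the ``larger feasible space'' observation made just before the lemma: restricting an optimal solution of the $\mathcal{F}_1$-problem to the flows of $\mathcal{F}_2$ (deleting the variables $\lambda_{t,\tau}^{o,d,p,v}$ and $\eta_t^{o,d}$ of the eliminated tuple) leaves the flow-conservation constraints (\ref{eq:robust_model}b) of the surviving flows untouched, can only lower every vehicle load $\sum_{o}\sum_{d}\sum_{t}\lambda_{t,\tau}^{o,d,p,v}$ and therefore preserves the capacity constraints (\ref{eq:robust_model}c), and can only decrease the objective, because every term $(w_{t,\tau}^{o,d,p,v}+\gamma\phi^{o,d,p})\lambda_{t,\tau}^{o,d,p,v}$ and every term $M\eta_t^{o,d}$ is non-negative.

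For the upper estimate I would start from an optimal solution $(\boldsymbol{x}^\star,\boldsymbol{\lambda}^\star,\boldsymbol{\eta}^\star(\cdot))$ of the $\mathcal{F}_2$-problem, with $\boldsymbol{\eta}^\star$ taken in adaptive (linear-decision-rule) form consistent with the reformulation $(RO')$ discussed above, and extend it to the $\mathcal{F}_1$-problem by a ``do not serve the re-introduced flow'' rule: keep $\boldsymbol{x}^\star$ and $\boldsymbol{\lambda}^\star$, set $\lambda_{t,\tau}^{o,d,p,v}\equiv 0$ for all $\tau,p,v$ on the re-introduced tuple $(o,d,t)$, and set $\eta_t^{o,d}(\boldsymbol{\zeta}) = \mu_t^{o,d}+\sigma_t^{o,d}\zeta_t^{o,d}$. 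This last choice is an affine, hence admissible, decision rule, and it is non-negative because demand is non-negative. By construction the new flow-conservation constraint holds for every $\boldsymbol{\zeta}$, all vehicle loads and hence all capacity constraints are unchanged, and $\boldsymbol{x}^\star\in\mathcal{X}_B$ still holds, so the extended point is feasible for the $\mathcal{F}_1$-problem.

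It then remains to bound its objective. The worst-case cost of the extended solution over $\boldsymbol{\zeta}$ in the budget set built on $\mathcal{F}_1$ splits into the cost contributed by the flows of $\mathcal{F}_2$ plus the single extra term $M\bigl(\mu_t^{o,d}+\sigma_t^{o,d}\zeta_t^{o,d}\bigr)$. I would invoke two inclusion facts: (i) projecting any admissible $\boldsymbol{\zeta}$ for $\mathcal{F}_1$ onto the coordinates indexed by $\mathcal{F}_2$ yields a vector still admissible for $\mathcal{F}_2$, since coordinate projection increases neither $\|\cdot\|_\infty$ nor $\|\cdot\|_1$; and (ii) the eliminated coordinate satisfies $|\zeta_t^{o,d}|\le 1$ by the $\ell_\infty$ bound in $\mathcal{U}(\Gamma)$. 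Using ``the maximum of a sum is at most the sum of the maxima'' then gives worst-case cost $\le Z^*(\mathcal{F}_2) + M\bigl(\mu_t^{o,d}+\sigma_t^{o,d}\bigr) \le Z^*(\mathcal{F}_2) + M\ell$, comfortably below the claimed $Z^*(\mathcal{F}_2)+2M\ell$. Feasibility of the extended point then forces $Z^*(\mathcal{F}_1)$ to be no larger, and the lemma follows.

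The only place where genuine care is needed is the robust/adaptive bookkeeping: one must be explicit that $\boldsymbol{\eta}$ enters as a decision rule (otherwise the equality constraints make the comparison degenerate), check that the rule $\eta_t^{o,d}=\mu_t^{o,d}+\sigma_t^{o,d}\zeta_t^{o,d}$ lies in the affine class actually used by the model, and relate the uncertainty sets of the two instances through the projection argument rather than treating them as identical. Everything else — feasibility of the two constructed points and non-negativity of all objective coefficients — is routine, and the slack between $M\ell$ and $2M\ell$ leaves room to absorb any truncation-at-zero convention for demand without affecting the conclusion.
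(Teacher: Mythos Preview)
Your argument is correct and is in fact cleaner and more rigorous than the paper's own proof. The paper argues in the opposite direction: it considers what is lost in $Z^*$ when \emph{removing} the tuple $(o,d,t)$ from $\mathcal{F}_1$, and identifies two separate sources of objective reduction --- (i) the removed flow no longer contributes journey time or unsatisfied penalty, bounded by $M(\mu_t^{o,d}+\sigma_t^{o,d})\le M\ell$, and (ii) the capacity freed up by removing that flow can be reallocated to previously unsatisfied passengers, also bounded by $M(\mu_t^{o,d}+\sigma_t^{o,d})\le M\ell$ --- and sums them to get $2M\ell$. Your construction instead starts from the $\mathcal{F}_2$-optimum and \emph{extends} it to a feasible point for $\mathcal{F}_1$ by declaring the new flow entirely unserved; because this keeps the existing assignment frozen, the reallocation effect never enters, and you obtain the tighter bound $M\ell$ (the stated $2M\ell$ then holds a fortiori). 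Your approach buys a sharper constant and a fully explicit feasible witness, at the cost of having to be careful about the adaptive role of $\boldsymbol{\eta}$ and the relationship between the two uncertainty sets --- both of which you handle correctly via the projection argument. The paper's version is more informal and arguably double-counts, but has the minor advantage of not needing to discuss decision rules or uncertainty-set projections at all.
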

\begin{proof}
    When changing the passenger flow set $\mathcal{F}_1$ to $\mathcal{F}_2$ by excluding one passenger flow tuple $(o, d, t)$, the objective value of the problem $(RO)$ decreases. The reduction of the objective value is induced by two reasons: i) less demand considered in the objective function, hence less total journey time and unsatisfied penalty, and ii) reallocation of passengers given more available vehicle capacity.

    The robust TFSP model $(RO)$ minimizes the worst-case demand scenario. Therefore, we consider the worst-case objective loss when excluding one passenger flow $(o, d, t)$. For the objective loss induced by demand reduction, it is upper-bounded by $M \cdot (\mu_t^{o,d} + \sigma_t^{o,d})$, since $M$ dominants passengers' journey time and $(\mu_t^{o,d} + \sigma_t^{o,d})$ represents the largest demand for passenger flow $(o, d, t)$ defined in the uncertainty set $\mathcal{U}(\Gamma)$. Let $\ell = \max_{(o,d,t) \in \mathcal{F}}(\mu_t^{o,d} + \sigma_t^{o,d})$ and $\ell$ is a finite value since demand values in TFSP are finite integers. Then the objective loss from demand reduction is upper-bounded by $M \cdot \ell$.

    For the objective loss induced by demand reallocation, excluding one passenger flow $(o, d, t)$ equals having $(\mu_t^{o,d} + \sigma_t^{o,d})$ more vehicle capacity. The worst-case scenario is other unsatisfied passenger flows become satisfied when having more available capacity, which is upper-bounded by $M \cdot (\mu_t^{o,d} + \sigma_t^{o,d})$. Similar to the previous argument, it is upper-bounded by a finite value $M \cdot \ell$.

    Combining two sources of the objective decrease, the maximum reduction of the objective value in $(RO)$ is upper-bounded by $2M \cdot \ell$ when excluding one passenger flow $(o,d,t)$ from $\mathcal{F}_1$.
\end{proof} 

\begin{definition}
    \textbf{Dimensionality Reduction Function:} given the value of $\epsilon$ in the heuristic-based component of the TD approach, the dimensionality reduction function is defined as
    \begin{equation*}
        f(\epsilon) = \left| \{(o,d,t): \mu_t^{o,d} \leq \epsilon \} \right|,
    \end{equation*}
    which is the size of passenger flows excluded from $\mathcal{F}$. The dimensionality reduction function $f(\epsilon)$ has the following properties:
    \begin{enumerate}
        \item $f(\epsilon = 0) = 0$ (assuming all $\mu_t^{o,d} > 0$) and $\lim_{\epsilon \rightarrow \infty}f(\epsilon) = |\mathcal{F}|$.
        \item $f(\epsilon)$ monotonically increases when $\epsilon$ increases.
        \item $0 \leq f(\epsilon) \leq |\mathcal{F}| < +\infty$. 
    \end{enumerate}
\end{definition}

The first property holds because we do not exclude any passenger flows with when $\epsilon = 0$, and all passenger flows are excluded when $\epsilon$ is a large enough value. The second property holds since more passenger flows will be excluded when increasing $\epsilon$. The last property is directly derived from the first two. Note that $f(\epsilon)$ is finite because the total number of passenger flows is finite considering a finite network and time interval in practice. By defining the dimensionality reduction function, we have the following proposition:

\begin{prop} \label{prop:finite_loss_bound}
    For the robust TFSP model $(RO)$ applying the TD approach, the objective reduction is upper-bounded by a finite value $\Lambda (\epsilon) =  2M \cdot \ell\cdot  f(\epsilon)$. Mathmatically:
    \begin{align}
        Z^*(\mathcal{F}) - Z^*(\Tilde{\mathcal{F}}) \leq 2M \cdot \ell\cdot  f(\epsilon)
    \end{align}
    $\Lambda (\epsilon)$ has the following properties:
    \begin{enumerate}
        \item $\Lambda (\epsilon = 0) = 0$.
        \item $\Lambda(\epsilon)$ monotonically increases when $\epsilon$ increases.
    \end{enumerate}
\end{prop}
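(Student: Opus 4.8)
The plan is to prove the bound by iterating Lemma~\ref{lemma:unit_loss_bound} along a chain of nested passenger flow sets, and then to read off the two stated properties of $\Lambda$ directly from the corresponding properties of $f$.

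First I would note that, by the definition of the heuristic-based component, the adjusted set $\Tilde{\mathcal{F}} = \{(o,d,t): \mu_t^{o,d} > \epsilon\}$ is obtained from $\mathcal{F}$ by deleting exactly the $f(\epsilon) = \left|\{(o,d,t): \mu_t^{o,d} \leq \epsilon\}\right|$ passenger flows whose mean demand is at most $\epsilon$. Enumerating these deleted flows in an arbitrary order, I would build a finite chain $\mathcal{F} = \mathcal{F}_0 \supset \mathcal{F}_1 \supset \cdots \supset \mathcal{F}_{f(\epsilon)} = \Tilde{\mathcal{F}}$ in which $\mathcal{F}_{i+1}$ is obtained from $\mathcal{F}_i$ by removing one further flow, so that $|\mathcal{F}_i| - |\mathcal{F}_{i+1}| = 1$. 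Each intermediate problem $(RO)$ restricted to $\mathcal{F}_i$ is well posed: the slack variables $\boldsymbol{\eta}$ keep the formulation feasible for every subset of flows, so $Z^*(\mathcal{F}_i)$ is a finite optimum throughout, and removing a flow only relaxes the constraints, hence $Z^*(\mathcal{F}_i) \geq Z^*(\mathcal{F}_{i+1})$, which makes the telescoping differences nonnegative and the bound non-vacuous.

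Next I would apply Lemma~\ref{lemma:unit_loss_bound} to each consecutive pair $(\mathcal{F}_i, \mathcal{F}_{i+1})$ to obtain $Z^*(\mathcal{F}_i) - Z^*(\mathcal{F}_{i+1}) \leq 2M\ell$, where $\ell = \max_{(o,d,t) \in \mathcal{F}}(\mu_t^{o,d} + \sigma_t^{o,d})$; since the maximum over the full set $\mathcal{F}$ dominates the maximum over any subset $\mathcal{F}_i$, using the global constant $\ell$ only weakens, and therefore preserves, the per-step bound. Summing over $i = 0, \dots, f(\epsilon) - 1$ and telescoping the left-hand side gives
$$Z^*(\mathcal{F}) - Z^*(\Tilde{\mathcal{F}}) = \sum_{i=0}^{f(\epsilon)-1} \left(Z^*(\mathcal{F}_i) - Z^*(\mathcal{F}_{i+1})\right) \leq 2M\ell \cdot f(\epsilon) = \Lambda(\epsilon),$$
which is finite because $f(\epsilon) \leq |\mathcal{F}| < \infty$ and $\ell$ is finite (demand values are finite integers). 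Property~1 is then immediate, since $\Lambda(0) = 2M\ell \cdot f(0) = 0$ by the first property of $f$; property~2 follows because $2M\ell \geq 0$ is a constant and $f$ is monotonically increasing in $\epsilon$, so $\Lambda$ inherits the monotonicity.

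I do not anticipate a substantive obstacle: the proposition is essentially a finite induction on Lemma~\ref{lemma:unit_loss_bound}. The only points requiring a little care are (i) justifying that the fixed constant $\ell$ defined over $\mathcal{F}$ may be reused when the lemma is invoked along the chain of shrinking sets, and (ii) confirming that every $Z^*(\mathcal{F}_i)$ is a well-defined finite optimum so the telescoping manipulation is legitimate — both handled by the slack-variable feasibility observation and the finiteness of demand already used in the proof of Lemma~\ref{lemma:unit_loss_bound}.
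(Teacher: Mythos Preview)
Your proposal is correct and follows essentially the same approach as the paper: iterate Lemma~\ref{lemma:unit_loss_bound} over the $f(\epsilon)$ removed flows and then read off the properties of $\Lambda$ from those of $f$. Your version is actually more careful than the paper's, explicitly setting up the telescoping chain and noting why the global constant $\ell$ and the finiteness of each $Z^*(\mathcal{F}_i)$ are justified, whereas the paper simply multiplies the per-step bound by $f(\epsilon)$ without spelling out these details.
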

\begin{proof}
    Lemma \ref{lemma:unit_loss_bound} implies that the objective reduction due to excluding one passenger flow tuple $(o,d,t)$ from $\mathcal{F}$ is upper-bounded by $2M\ell$. The size of passenger flow tuples excluding from $\mathcal{F}$ given $\epsilon$ is $f(\epsilon)$. Therefore, the objective reduction is upper-bounded by $2M \ell f(\epsilon)$, which is a finite value since $f(\epsilon)$ is upper-bounded by $|\mathcal{F}|$. Define $\Lambda(\epsilon) = 2M \ell f(\epsilon)$ and we have shown the objective loss $\Lambda(\epsilon)$ is upper-bounded.

    According to the definition of dimensionality reduction function, when $\epsilon = 0$, we have $ f(\epsilon = 0) = 0$, thus $\Lambda (\epsilon = 0) = 0$. Moreover, since $ f(\epsilon)$ monotonically increases when $\epsilon$ increases, $\Lambda(\epsilon)$ also monotonically increases when $\epsilon$ increases. 
\end{proof}

Proposition \ref{prop:finite_loss_bound} indicates that the objective change due to the heuristic-based component of the TD approach is upper-bounded by a finite value. Meanwhile, decreasing the value of $\epsilon$ leads to a tighter bound. This shows that our proposed TD method is a valid approximation of the original problem with bounded errors. This proposition is validated with the experiments on the sensitivity analysis of $\epsilon$ in Section \ref{subsec:uncertain_model}.   

Setting the value of $\epsilon$ is critical in the proposed method. The value of $\epsilon$ should be chosen to balance the trade-off between transit schedule performance and problem complexity. Let $m$ represent the number of days considered in the problem. The proposed heuristic approach works well in practice when setting $\epsilon = \frac{1}{m}$, indicating that passenger flows that appear only once over $m$ days will be excluded from the problem.  

Overall, the proposed TD approach helps to solve TFSPs with large-scale demand matrices. The first component maintains optimality and the second heuristic-based component could lead to sub-optimal solutions.

\section{Results}
\label{sec:results}

In this section, the numerical results of the proposed models will be covered. All experimental results in this paper were generated on a machine with a 3.0 GHz AMD Threadripper 2970WX Processor and 128 GB Memory. The linear programs in the experiments for generating optimal transit schedules and evaluating solution performances were solved with Gurobi 9.0.3~\cite{gurobi}.

The results section is organized as follows. 
Section \ref{subsec:data_description} describes data, parameter values, and experimental setups.
Section \ref{subsec:base_model} displays performance comparisons between the optimized schedule w/o considering demand uncertainty and the current schedule. Sensitivity analyses and crowding extensions are also discussed in this section. 
Section \ref{subsec:uncertain_model} shows performance comparisons between current, stochastic, and robust transit schedules.  

\subsection{Data Description}
\label{subsec:data_description}

\begin{table}[h!]
    \small
    \centering
    \resizebox{\textwidth}{!}{
    \begin{tabular}{ l | l | c }
     \hline 
     Model Parameter & Explanation & Base Case Value\\
     \hline
     $T_{start}$ & Start time of planning period & 07:00  \\
     $T_{end}$ & End time of planning period & 09:00  \\
     $\Delta$ & Decision time interval length & 5 (minutes)  \\
     $T$ & Number of decision time periods & 24 \\
     $\mathcal{P}$ & Set of patterns for the transit line & \{49, X49\} \\
     $\mathcal{V}$ & Set of bus types & \{standard, articulated\}\\
     $C_v$ & Number of seats on buses & \{37, 58\} \\
     $\Bar{C}_v$ & Maximum vehicle capacity & \{70, 107\}\\
     $c^{p,v}, \forall p \in \mathcal{P}, \forall v \in \mathcal{V}$ & Cost parameter for bus with pattern $p$ and vehicle type $v$ & 1 \\
     $B$ & Total vehicle budget during the planning period & 20 \\
     $M$ & Penalty for an unsatisfied passenger & $10^5$ \\
     $\gamma$ & Weight parameter for in-vehicle travel time & 1\\
     $m$ & Number of demand scenarios & 22 \\
     $\epsilon$ & Heuristic parameter for demand matrix size reduction & 0.05 \\
     \hline
     \end{tabular}}
    \caption{Model parameters and base case value.}
    \label{tab:model_parameter}
\end{table}

Parameter values used in the experiments are shown in Table \ref{tab:model_parameter}.
The study transit lines used in the experiments are Route 49 northbound and Route X49 northbound operated by the CTA. Route 49 and Route X49 both serve Western Avenue in western Chicago. Route X49 is an expressed version of Route 49 with limited stops. Route 49 has 82 bus stops and Route X49 has 35 bus stops. Both routes share the same terminals and connect multiple rail line services: Orange, Pink, Green, Blue, and Brown lines. 

In practice, transit schedules for Route 49 and Route X49 are determined separately. In our proposed optimization model, we will consider two routes as two patterns for a single transit line and generate both schedules simultaneously, i.e., $\mathcal{P} = \{ 49, X49 \}$. The position of both routes within the CTA transit network and stop overviews are shown in Figure \ref{fig:route_49_X49}.

\begin{figure}[!h]
    \centering
    \includegraphics[width=\textwidth]{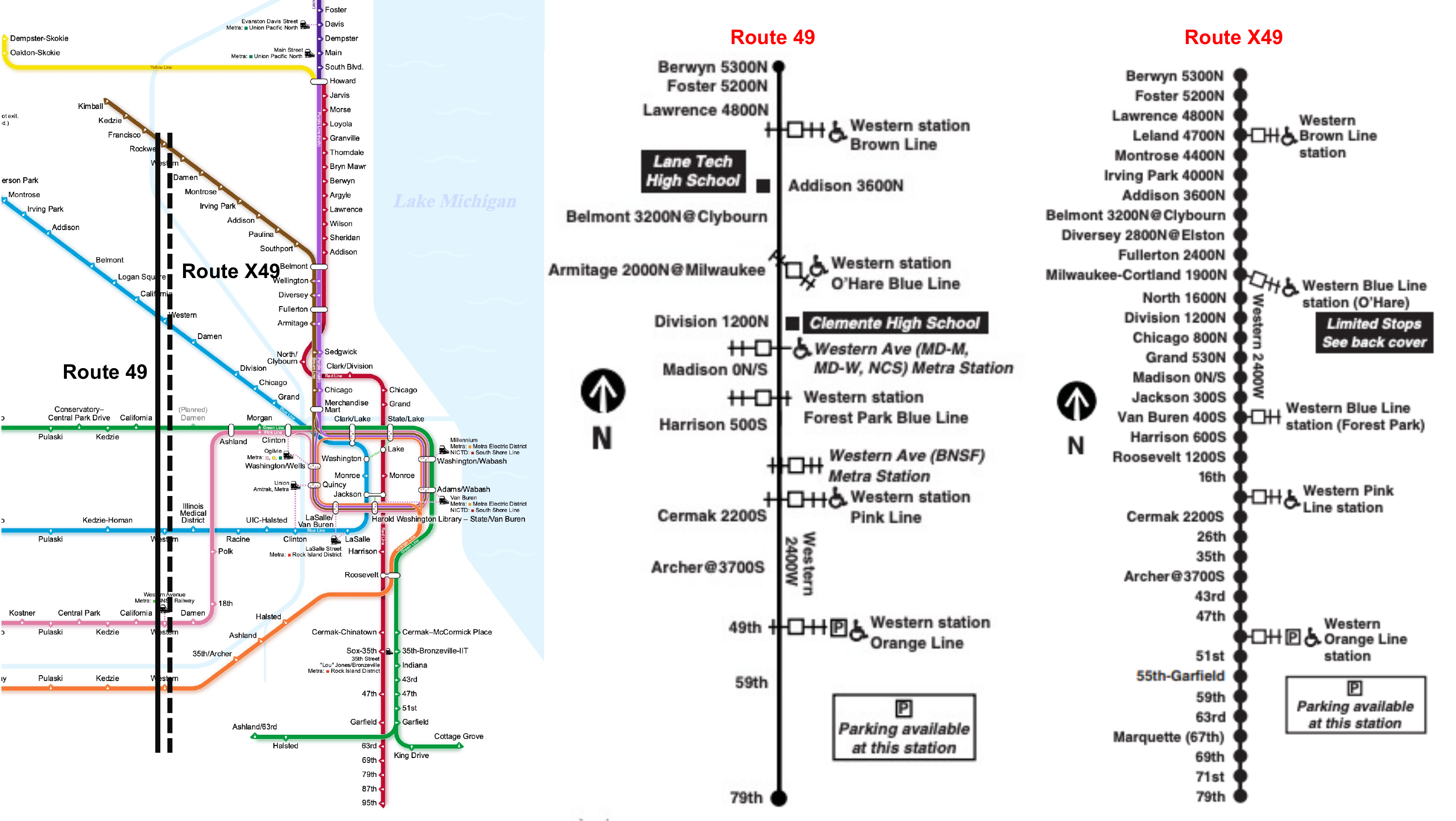}
    \caption{Positions and stop overviews of Route 49 and Route X49 in the CTA network.}
    \label{fig:route_49_X49}
\end{figure}

The data utilized in the experiments are from 22 weekdays in October 2020. The current transit schedule information is from an open-source Generalized Transit Feed Specification (GTFS) dataset, which is published by CTA every month. Regarding the running times between any two stops for different patterns, they are calculated based on the Automatic Vehicle Location (AVL) dataset of October 2020 provided by CTA. The OD matrix is generated based on CTA's ODX dataset from October 2020. 

The ``ODX'' stands for ``origin, destination, and transfer inference algorithm'', an algorithm developed by~\citet{ODX_Gabriel} and currently implemented within the CTA. The CTA transit network is equipped with a ``tap-on'' only fare collection system, indicating that alighting information is not reported in the system. The ODX algorithm is utilized to infer the alighting information and details can be found in \cite{ODX_Gabriel, ROVE_2022, ODX_Zhao}. 

The study period is a two-hour time interval from 7:00 AM to 9:00 AM. The length of each decision time interval is $\Delta = 5$ minutes, therefore, there are 24 time intervals considered in the transit frequency setting problem. For the existing transit schedule, there are 20 buses operating in total. The current northbound schedules for the study transit line are shown in Figure \ref{fig:current_schedules}.

\begin{figure}[!h]
    \centering
    \includegraphics[width=\textwidth]{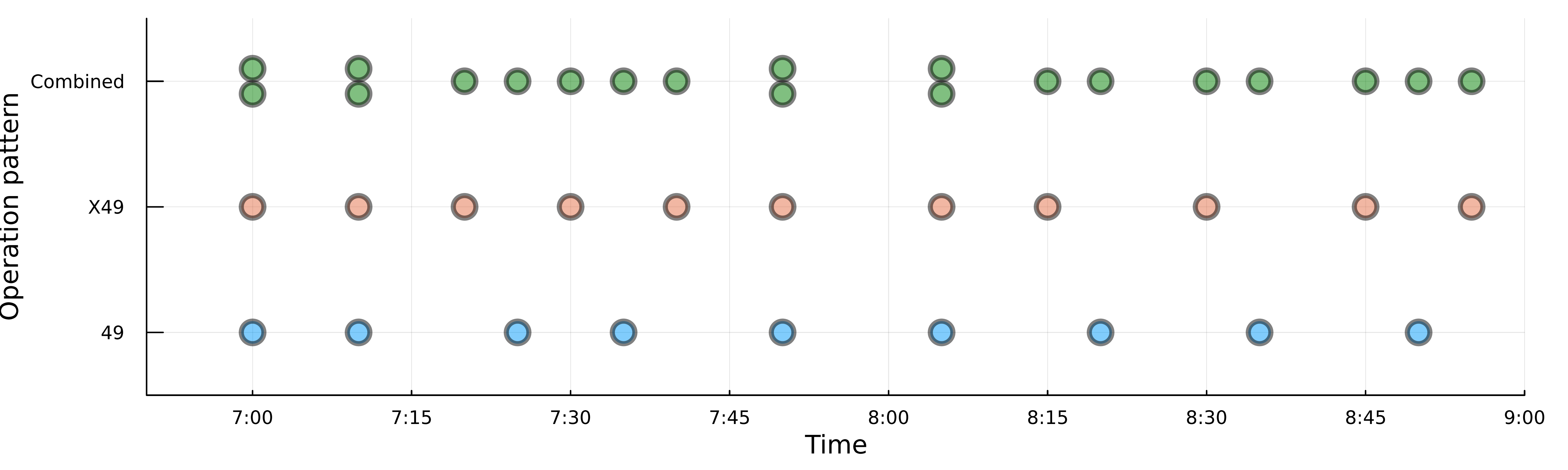}
    \caption{The current northbound transit schedule for Route 49, Route X49, and the combined transit line. Each colored dot represents a departure with a specific operation pattern from the terminal stop. }
    \label{fig:current_schedules}
\end{figure}

In the experiments, the budget constraint in (\ref{eq:feasible_x}) ensures that the total number of buses operating within the overall time interval does not exceed the maximum bus supply, i.e., $c^{p,v} = 1, \forall p \in \mathcal{P}, \forall v \in \mathcal{V}$, and $B = 20$.

For buses used in the experiments, we consider two types of buses: regular buses and articulated buses, i.e., $\mathcal{V} = \{regular, articulated\}$. The regular bus has 37 seats and a maximum capacity is 70, while the articulated bus has 58 seats with a maximum capacity of 107. The current schedule only utilizes regular buses for Route 49 and Route X49. Therefore, only regular buses are considered in the base case scenario.

\subsection{Baseline Model Performances}
\label{subsec:base_model}

\subsubsection{Optimal Transit Schedules}

To evaluate the performances of the nominal TFSP model $(P)$, we randomly choose a demand scenario from 22 weekdays to generate the optimal transit schedule, which is then compared with the current schedule over the remaining 21 demand scenarios. For the base case scenario, wait and travel times are equally important, i.e., $\gamma = 1$. The TD approach w/o the heuristic-based component is applied when solving the optimization model.

\begin{figure}[!h]
    \centering
    \includegraphics[width=\textwidth]{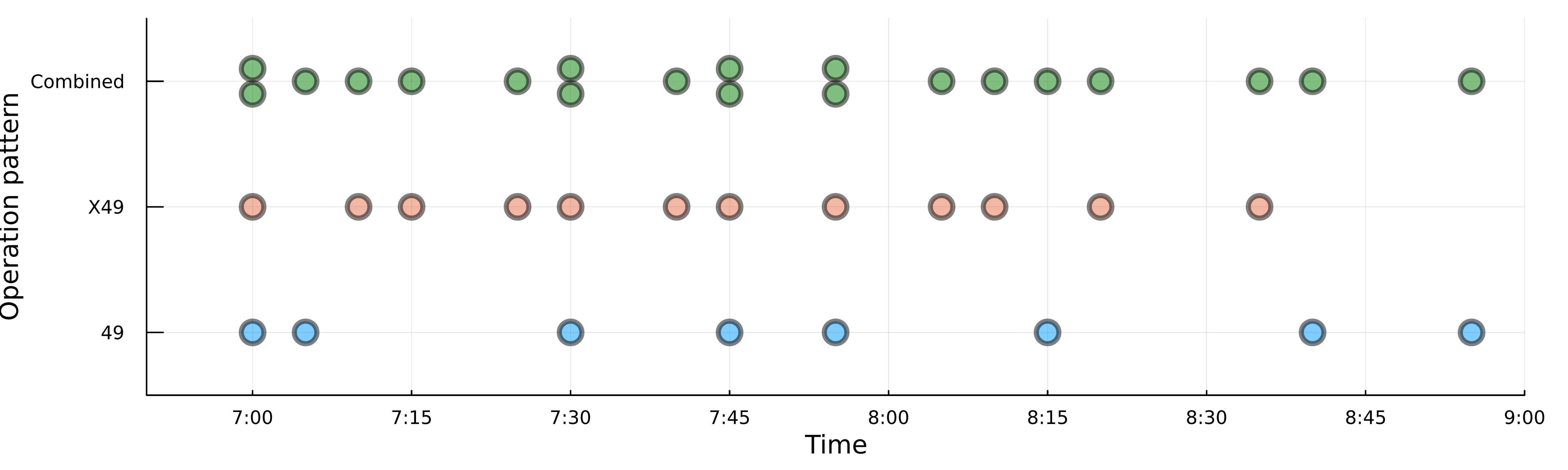}
    \caption{The optimized transit schedule w/o considering demand uncertainty based on a one-day demand scenario. Each colored dot represents a departure with a specific operation pattern from the terminal stop. }
    \label{fig:nominal_optimal_schedules}
\end{figure}

Figure \ref{fig:nominal_optimal_schedules} shows the optimized transit schedule w/o considering demand uncertainty based on a randomly-selected one-day demand scenario. Compared to the current schedule shown in Figure \ref{fig:current_schedules}, more buses are dispatched during the first hour. The optimized transit schedule w/o considering demand uncertainty becomes irregular due to serving a specific demand scenario. Meanwhile, it shifts one bus from Route 49 to Route X49.

\begin{figure}[!h]
    \centering
    \includegraphics[width=0.8\textwidth]{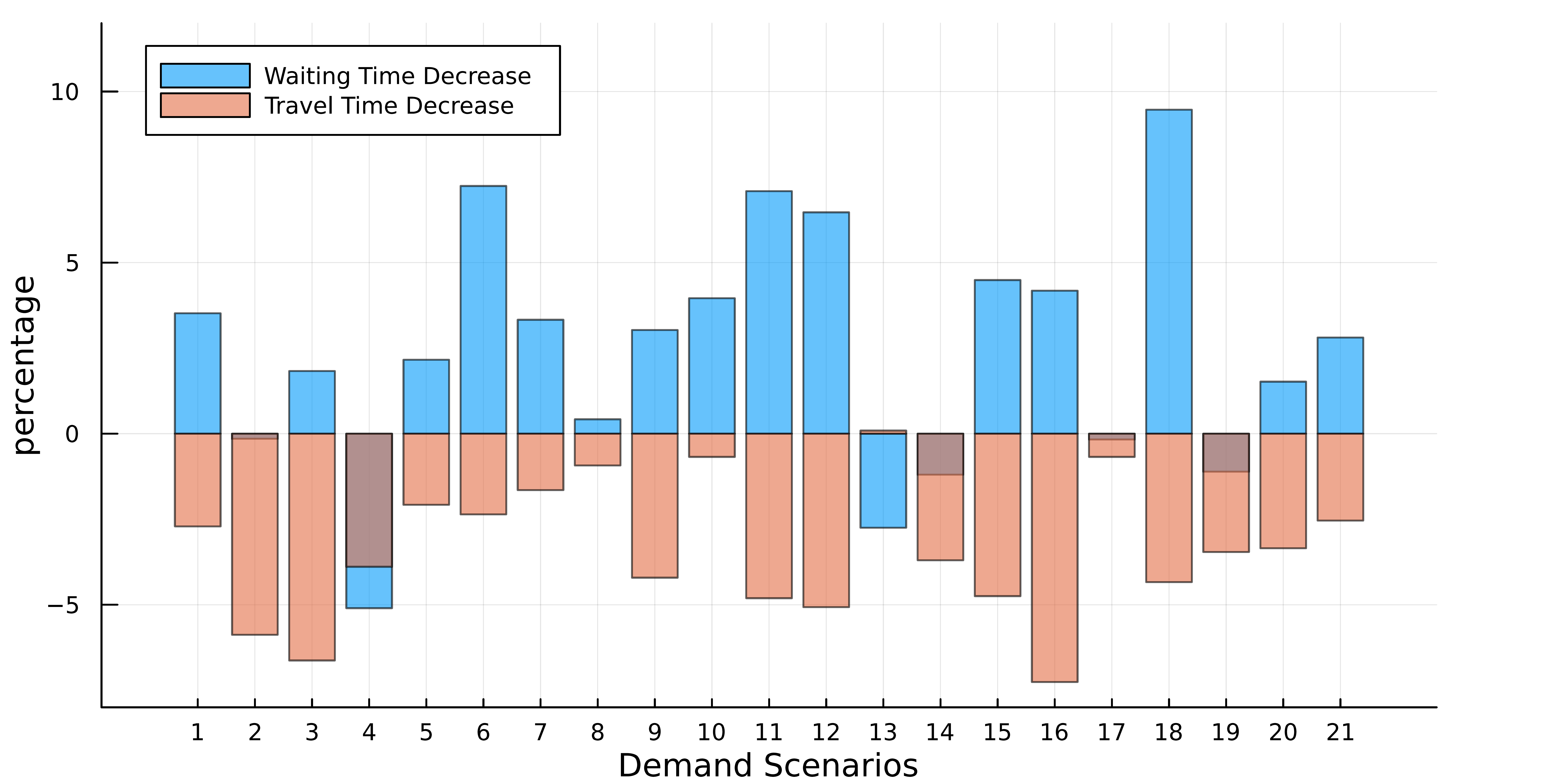}
    \caption{Performance comparisons between the current and the optimized transit schedules w/o considering demand uncertainty. Blue bars represent wait time decrease for the optimized transit schedule w/o considering demand uncertainty. Orange bars indicate travel time decrease for the optimized transit schedule w/o considering demand uncertainty.}
    \label{fig:base_perform_comparison}
\end{figure}

The performance comparison over 21 demand scenarios is shown in Figure \ref{fig:base_perform_comparison}. Bars indicate wait and travel time decreases for the optimal schedule compared to the current schedule. For the optimized transit schedule w/o considering demand uncertainty, passengers experience lower wait times in 15 out of 21 demand scenarios. However, passengers have higher in-vehicle travel times for almost all demand scenarios given the current transit schedule. In summary, a 2.43\% wait time decrease and a 3.38\% travel time increase are brought to passengers on average when switching from the current schedule to the optimized schedule w/o considering demand uncertainty. It works best for the input demand scenario of the optimization model. For other demand scenarios, it reduces passengers' wait times by sacrificing in-vehicle travel times. 

The performance comparison indicates that demand uncertainty is crucial when generating transit schedules. The optimized transit schedule w/o considering demand uncertainty does not have an edge over the existing transit schedule, which maintains a regular headway. 

\subsubsection{Crowding Extensions} \label{subsubsec:crowding}

Next, we will discuss the crowding extension of the nominal TFSP model $(P - C)$. Existing demand scenarios from October 2020 lead to very few crowded transit vehicles. Therefore, model performances will be tested based on a synthetic demand scenario with an expanded demand level.
The synthetic demand data is generated as follows: for each passenger flow $(o, d, t)$ with a non-zero average demand value $\mu_t^{o,d}$ over 22 demand scenarios, generate the new demand level according to a Poisson distribution $u_t^{o,d} \sim Pois(\beta \cdot \mu_t^{o,d})$, where $\beta$ indicates an expansion factor. In the following discussion, we generate a synthetic demand scenario with an expansion factor $\beta = 4$.

\begin{figure}[!h]
    \centering
    \includegraphics[width=0.8\textwidth]{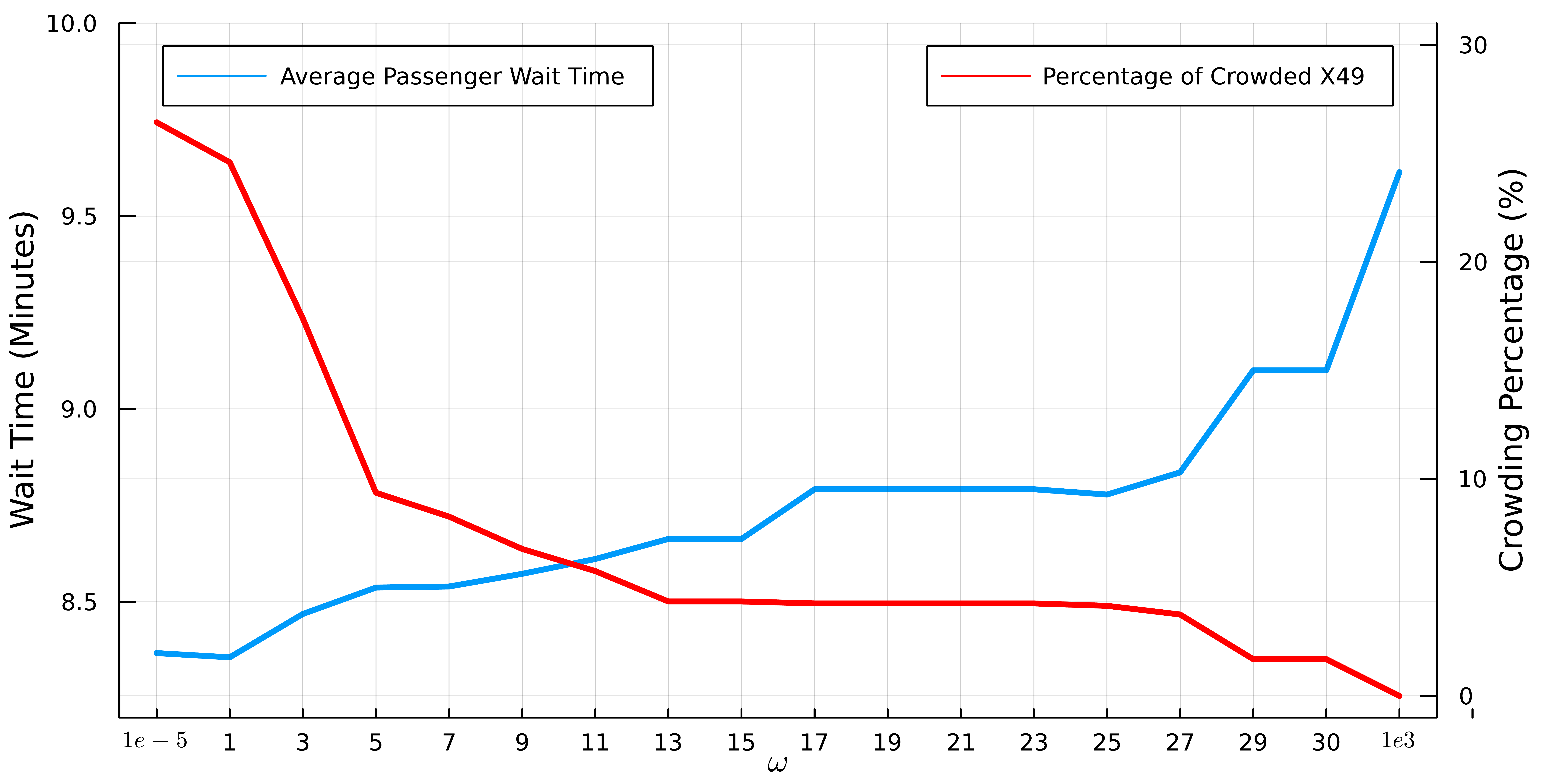}
    \caption{Trade-offs between average passenger wait times and crowding levels given different $\omega$ values.}
    \label{fig:crowding_tradeoff}
\end{figure}

In the crowding-extended model $(P - C)$, parameter $\omega$ is utilized to control the level of penalty for crowded transit vehicles in the objective function. Figure \ref{fig:crowding_tradeoff} shows the average passenger wait time and percentage of crowded X49 given different values of $\omega$. 
For the base case scenario $(\omega = 10^{-5})$ with the expanded demand scenario, 26.43\% of running time for transit vehicles operated on pattern X49 is crowded while 2.95\% of pattern 49 running time is crowded. The average passenger wait time is 8.37 minutes. 
When increasing the crowding penalty $\omega$, the crowding level on pattern X49 decreases while the average passenger wait time increases. When the value of $\omega$ exceeds a certain threshold, all passengers can have seats on buses and the average passenger wait time increases to 9.61 minutes, which is increased by 14.81\%.

It is worth noting that the crowding level is reduced by the purposely left-behind behaviors of passengers. However, passengers will always board the first available transit vehicle in reality. One way to resolve this conflict is by introducing articulated buses with a larger seat capacity. Figure \ref{fig:articulated_buses} displays the crowding percentage of pattern X49 given different numbers of available articulated buses. Introducing 6 additional articulated buses reduces the percentage of running time on pattern X49 with crowded transit vehicles to 8.72\%. The optimized transit schedule with articulated buses is shown in Figure \ref{fig:articulated_buses_schedule}. To better reduce the crowding on buses, articulated are dispatched within the first hour when more passengers are taking transit services.

\begin{figure}[!h]
    \centering
    \includegraphics[width=0.8\textwidth]{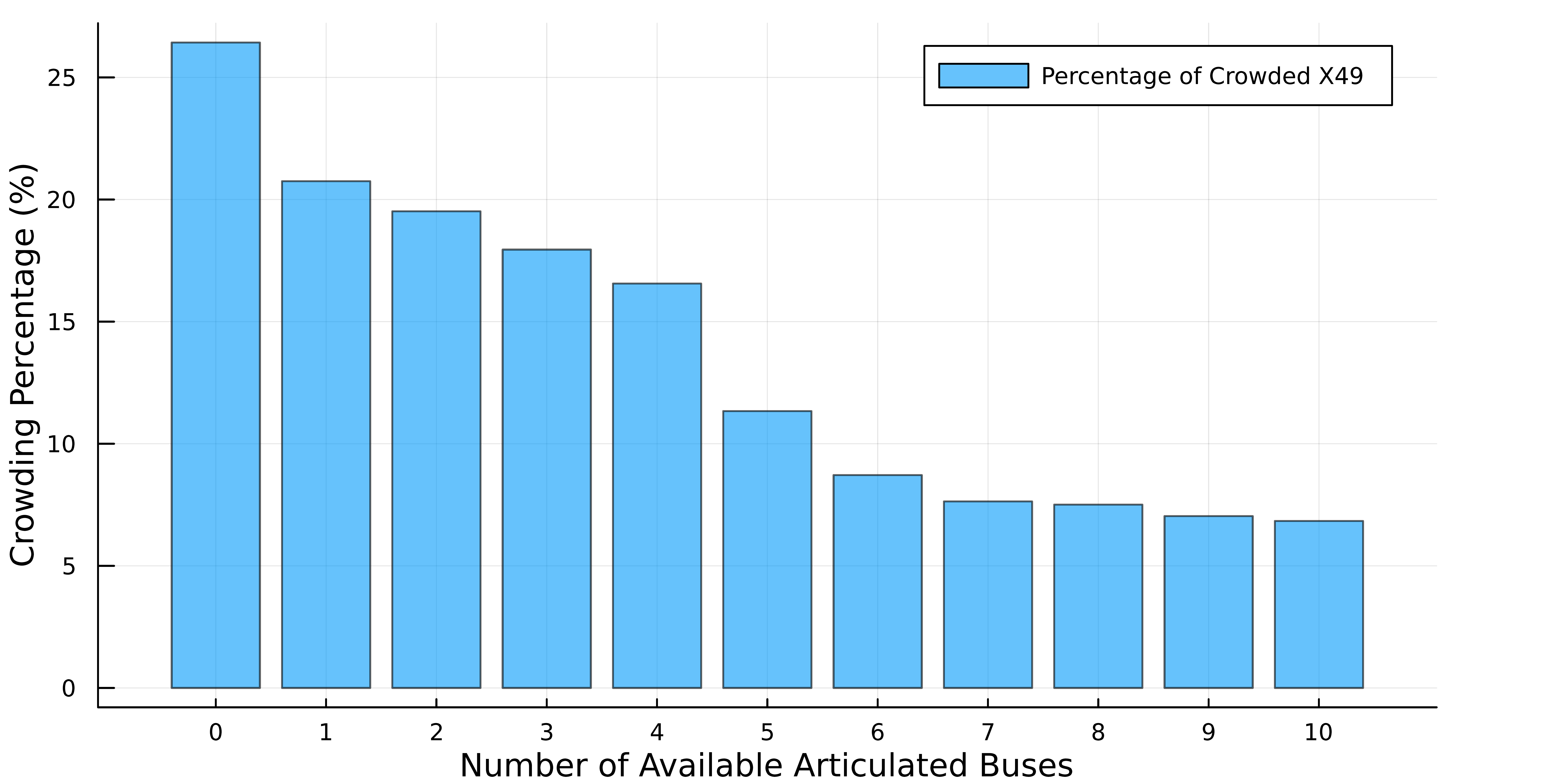}
    \caption{Trade-offs between average passenger wait times and crowding levels given different $\omega$ values.}
    \label{fig:articulated_buses}
\end{figure}

\begin{figure}[!h]
    \centering
    \includegraphics[width=\textwidth]{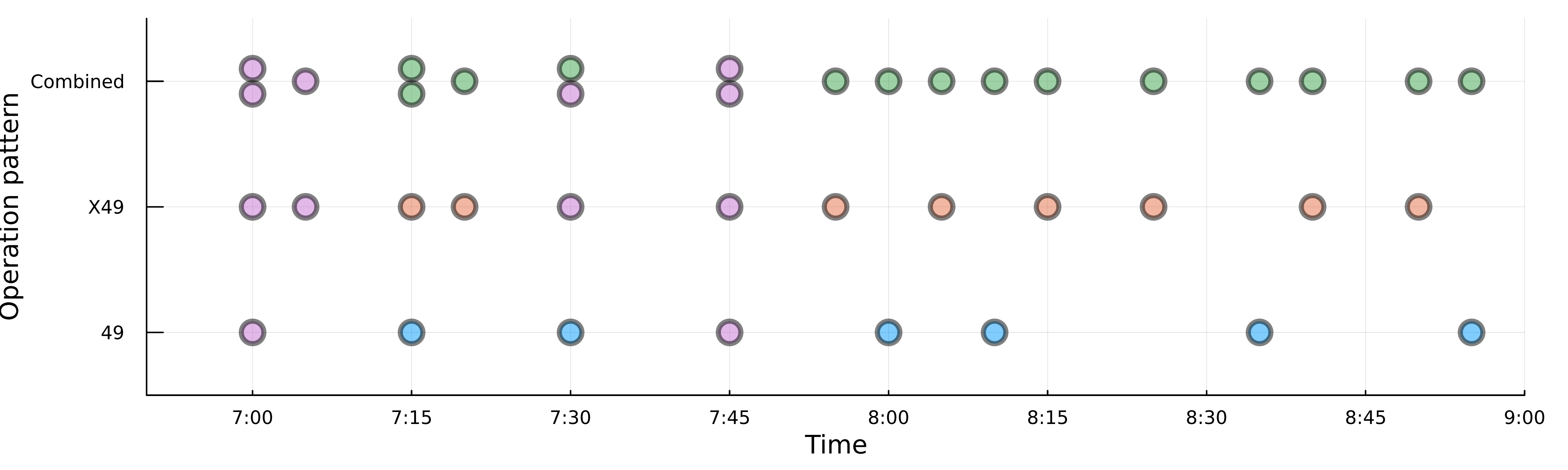}
    \caption{The optimal transit schedule with an expanded demand matrix and 6 available articulated buses. Each colored dot represents a departure with a specific operation pattern from the terminal stop. Each pink dot indicates a departure of an articulated bus from the terminal stop.}
    \label{fig:articulated_buses_schedule}
\end{figure}

The marginal benefit of bringing extra articulated buses drops significantly after having 6 articulated buses. For the scenario with 10 available articulated buses, the crowding percentage on pattern X49 is 7.04\%. In summary, having a small fleet of articulated buses can reduce the crowding levels on buses significantly in bus operations.  

\subsubsection{Sensitivity Analyses}

Lastly,  we test the sensitivity of the results when changing the weight parameter $\gamma$ for in-vehicle travel times. In previous experiments, $\gamma = 1$ was used as a base case, leading to a transit schedule that minimizes the total journey time. In this section, different values of $\gamma$ ranging from 0 to 2 with a 0.1 step size are tested. Results are shown in Figure \ref{fig:sensitivity_analyses}.

\begin{figure}[!h]
\centering
\begin{subfigure}{.495\textwidth}
  \centering
  \includegraphics[width=\linewidth]{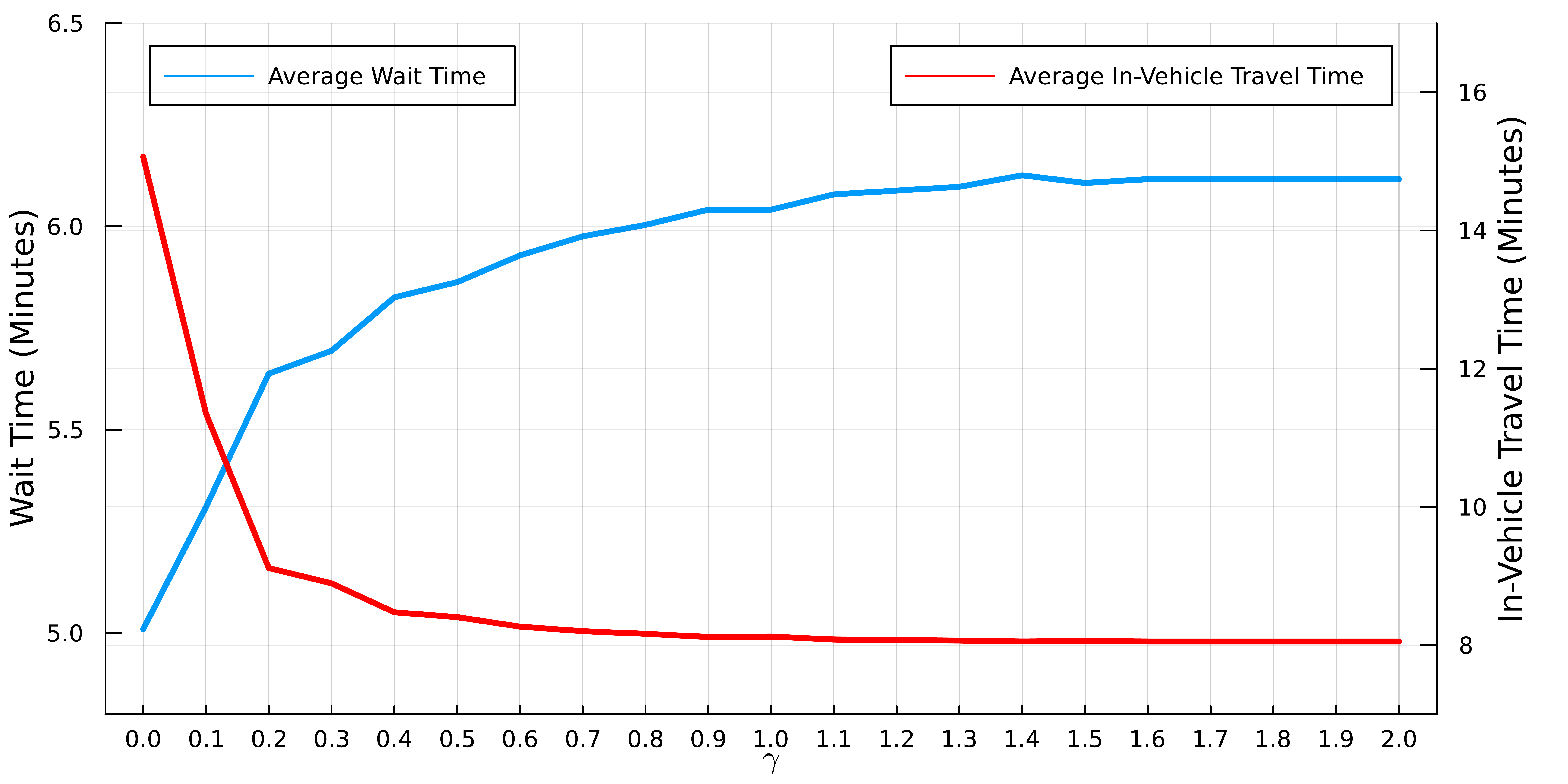}
  \caption{Average passenger wait time and average passenger in-vehicle travel time changes.}
  \label{fig:sensitivity_analyses_1}
\end{subfigure}
\begin{subfigure}{.495\textwidth}
  \centering
  \includegraphics[width=\linewidth]{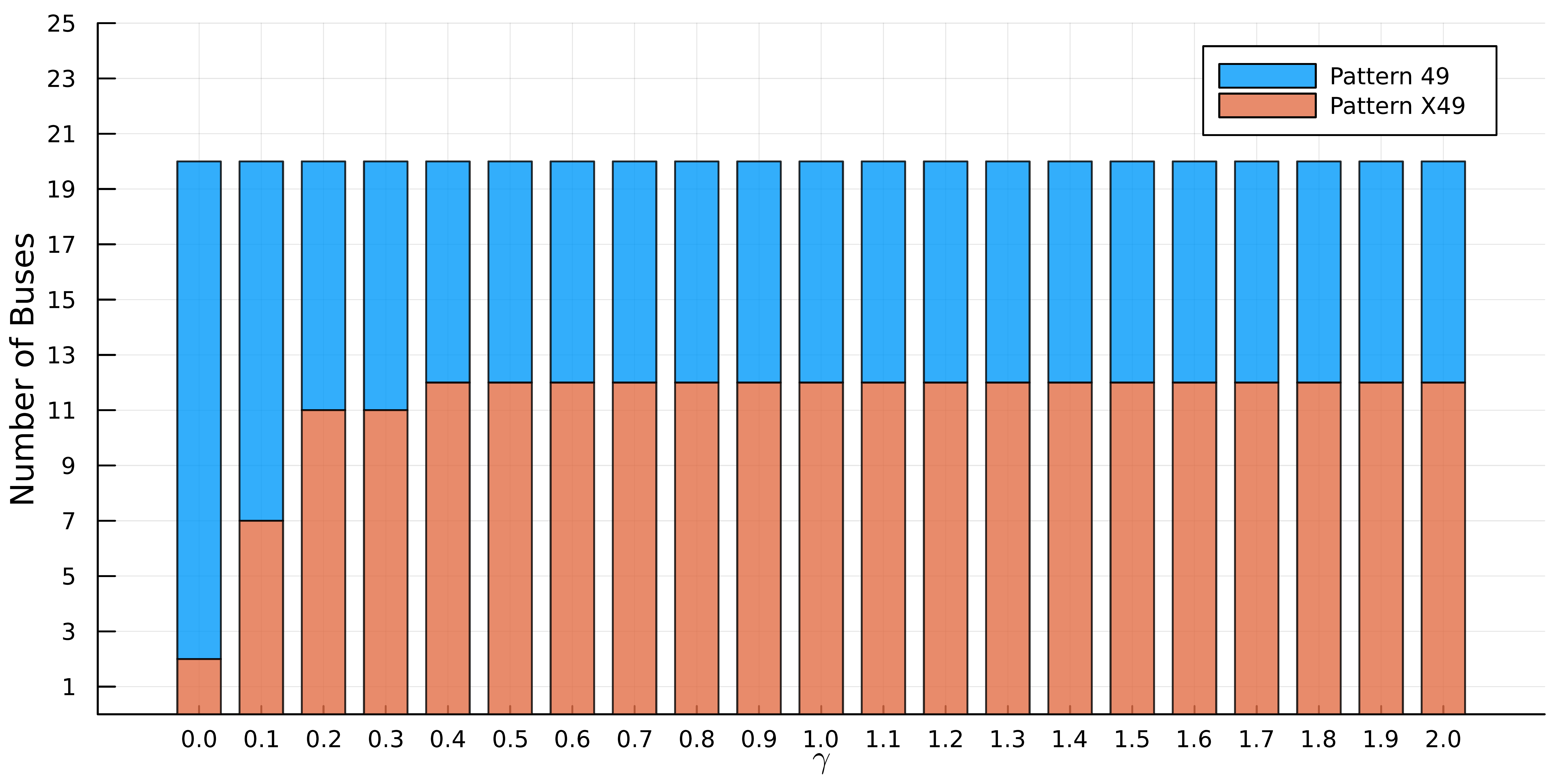}  
  \caption{Number of buses operated with pattern 49 and X49.}
  \label{fig:sensitivity_analyses_2}
\end{subfigure}
\caption{Sensitivity analyses results for the weight parameter $\gamma$.}
\label{fig:sensitivity_analyses}
\end{figure}

A smaller value of $\gamma$ indicates that wait times are more important than in-vehicle travel times. For the scenario with $\gamma = 0$, where transit schedules solely minimize passengers' wait times, the average wait time is 5.01 minutes and the average in-vehicle travel time is 15.07 minutes. The average wait time monotonically increases and the average in-vehicle travel time monotonically decreases when the value of $\gamma$ increases, which is shown in Figure \ref{fig:sensitivity_analyses_1}. For the scenario with $\gamma = 2$, where in-vehicle travel times are twice important as wait times, the average wait time is 6.12 minutes and the average in-vehicle travel time is 8.05 minutes. 

The average total travel time decreases from 20.08 minutes to 14.17 minutes when increasing $\gamma$ from 0 to 2. This is intuitive; more vehicles will be operated with pattern X49 when increasing $\gamma$, and pattern X49 has a larger vehicle speed than pattern 49 given fewer bus stops. Figure \ref{fig:sensitivity_analyses_2} shows the number of buses running on each pattern given different values of $\gamma$. Only 2 bus with pattern X49 is operated when $\gamma = 0$, while 12 buses with pattern X49 are operated when $\gamma$ becomes larger.

\subsection{Stochastic and Robust Model Performances}
\label{subsec:uncertain_model}

To incorporate demand uncertainty into the TFSP, the stochastic TFSP model $(SP)$ and the robust TFSP model $(RO)$ are proposed. In this section, we will compare the performances of each model with the current transit schedule over multiple synthetic demand scenarios. The synthetic demand scenario is generated following the method described in Section \ref{subsubsec:crowding} with no demand expansion, i.e., $\beta = 1$.

For the stochastic transit schedule, it is generated by the TD approach w/o the heuristic-based component. Figure \ref{fig:stochastic_model_performances} shows the performance comparison between stochastic and current transit schedules over 50 randomly-generated demand scenarios. On average, the stochastic schedule improves passengers' wait time by 4.71\% and in-vehicle travel time by 0.80\%. An optimized transit schedule over 22 demand scenarios is more robust than an optimized transit schedule with only one demand scenario. The stochastic transit schedule improves both wait and in-vehicle travel times in 41 out of 50 demand scenarios.

\begin{figure}[!h]
    \centering
    \includegraphics[width=0.8\textwidth]{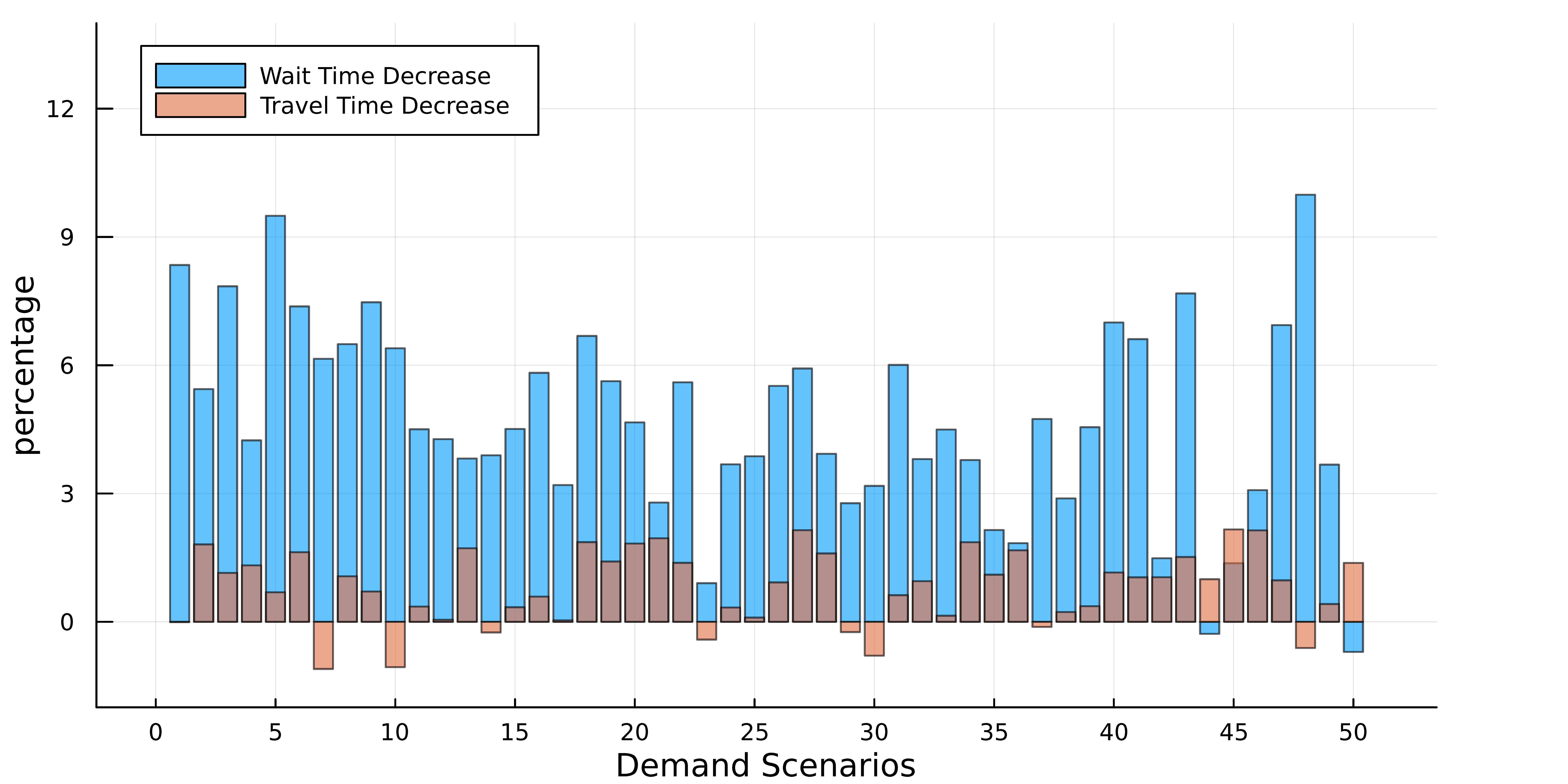}
    \caption{Performance comparisons between the current and the stochastic transit schedules over 50 randomly generated demand scenarios. Blue bars represent wait time decrease for the stochastic transit schedule. Orange bars indicate travel time decrease for the stochastic transit schedule.}
    \label{fig:stochastic_model_performances}
\end{figure}

Figure \ref{fig:stochastic_optimal_schedules} shows the stochastic transit schedule. Compared to the current transit schedule shown in Figure \ref{fig:current_schedules}, it has fewer time intervals where buses are dispatched for both patterns. In the combined transit schedule, buses are spread more evenly during the two-hour decision time period. Meanwhile, one additional bus is operated with pattern X49. Compared to the optimal transit schedule with one-day demand displayed in Figure \ref{fig:nominal_optimal_schedules}, the stochastic transit schedule maintains a stable headway for both patterns, which is similar to the current schedule, where the headway-based transit operation strategy is utilized.

\begin{figure}[!h]
    \centering
    \includegraphics[width=\textwidth]{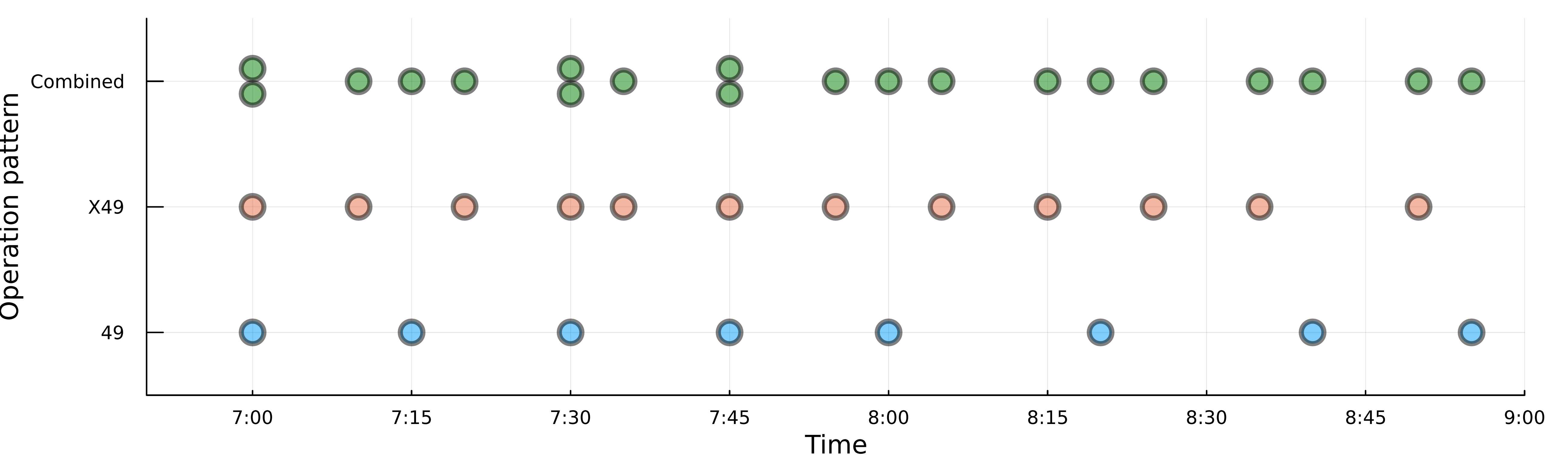}
    \caption{The stochastic transit schedule based on 22 demand scenarios. Each colored dot represents a departure with a specific operation pattern from the terminal stop. }
    \label{fig:stochastic_optimal_schedules}
\end{figure}

For the robust transit schedule, it is generated by the TD approach with $\epsilon = 0.05$, meaning that a passenger flow $(o,d,t)$ will be incorporated in the model only if it appears more than one time within 22 weekdays. The robust optimization model is solved by the off-the-shelf MIP (Mixed Integer Programming) solver Gurobi with a 3-hour time limit and an optimality gap of 0.5\%. Results are shown in Table \ref{tab:robust_results}.

\begin{table}[h!]
\centering
\begin{tabular}{ c | c c c | c c c | c  }\hline
{$\Gamma$} & Wait Time & Compare & Improve & Travel Time & Compare & Improve & GAP \\ \hline
0.0 & 7.749 & -0.98\% & 3.79\% & 8.456 & 0.13\% & 0.93\% & OPT\\
1.0 & 7.706 & -0.4\% & 4.34\% & 8.534 & -0.79\% & 0.02\% & 8.51\%\\
2.0 & 7.858 & -2.42\% & 2.42\% & 8.443 & 0.29\% & 1.09\% &3.56\%\\
3.0 & 7.942 & -3.5\% & 1.4\% & 8.425 & 0.49\% & 1.29\% &2.40\%\\
4.0 & 7.816 & -1.87\% & 2.94\% & 8.435 & 0.38\% & 1.18\% &1.38\%\\
5.0 & 7.779 & -1.39\% & 3.4\% & 8.425 & 0.5\% & 1.3\% &1.08\%\\
6.0 & 7.784 & -1.46\% & 3.34\% & 8.429 & 0.46\% & 1.26\% &0.75\%\\
7.0 & 7.784 & -1.46\% & 3.34\% & 8.429 & 0.46\% & 1.26\% &0.51\%\\
8.0 & 7.784 & -1.46\% & 3.34\% & 8.429 & 0.46\% & 1.26\% & OPT\\
9.0 & 7.736 & -0.84\% & 3.93\% & 8.443 & 0.29\% & 1.09\% & OPT\\
10.0 & 7.783 & -1.44\% & 3.36\% & 8.433 & 0.4\% & 1.2\% & OPT\\
\hline
\end{tabular}
\caption{Performance evaluations for robust transit schedules. $\Gamma$ indicates a parameter for controlling the size of budget uncertainty sets. $Wait \; Time$ and $Travel \; Time$ represent the average wait time and travel time for passengers over 50 randomly generated demand scenarios. $Compare$ indicates the performance comparison with the stochastic transit schedule. $Improv$ stands for the performance comparison with the current transit schedule. $GAP$ is the optimality gap for the MIP solver.}
\label{tab:robust_results}
\end{table}

Parameter $\Gamma$ controls the level of demand uncertainty incorporated in the model. A higher value of $\Gamma$ indicates that more demand uncertainty is considered when generating the robust transit schedule. When $\Gamma = 0$, the robust optimization is reduced to the nominal optimization model with the mean demand matrix $(\mu_t^{o,d})$ as the model input. For all uncertain scenarios, robust transit schedules outperform the current transit schedule by reducing both wait times and in-vehicle travel times. Compared to the stochastic transit schedule, robust transit schedules have better in-vehicle travel times and worse wait times for passengers.  

When increasing the value of $\Gamma$ in the model, the robust optimization model becomes easier to be solved as the optimality gap becomes smaller. The model can be solved optimally when $\Gamma$ is greater than 7. This can be explained as follows: a larger value of $\Gamma$ leads to a less-restricted optimization problem; heuristic approaches implemented in Gurobi are more likely to produce feasible solutions; better heuristic solutions reduce the time for branch-and-bound significantly. With respect to the model performance, it does not have a pattern regarding the uncertain parameter $\Gamma$. The robust transit schedule with $\Gamma = 10$ is shown in Figure \ref{fig:robust_optimal_schedules}. Other robust transit schedules are shown in \ref{appendix:robust_schedules}.

\begin{figure}[!h]
    \centering
    \includegraphics[width=\textwidth]{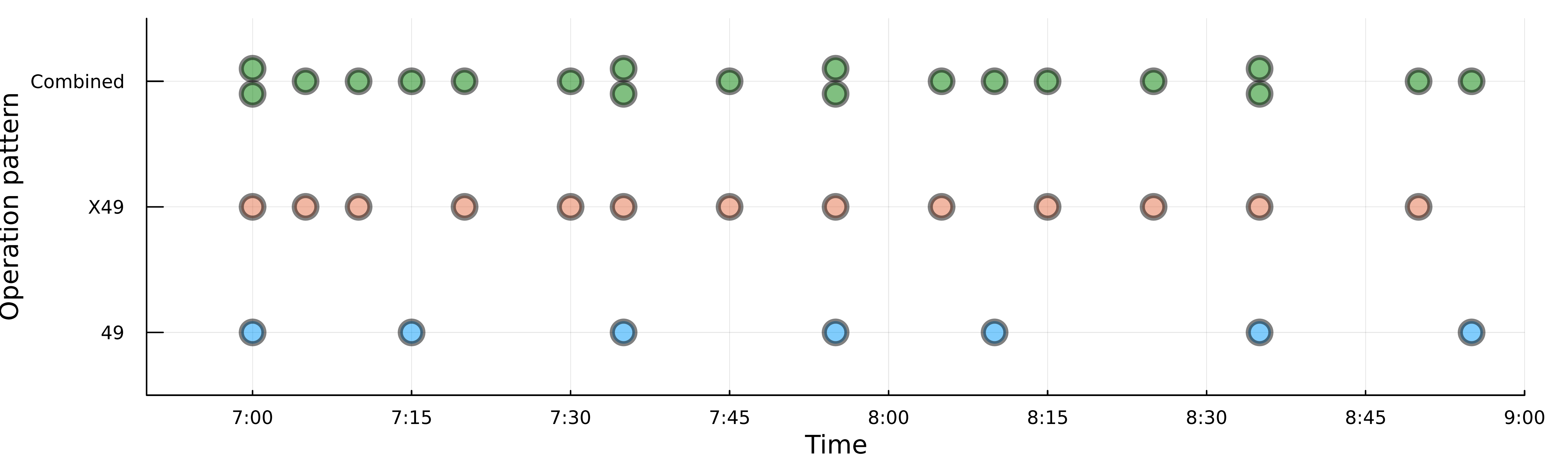}
    \caption{The robust transit schedule with $\Gamma = 10$. Each colored dot represents a departure with a specific operation pattern from the terminal stop.}
    \label{fig:robust_optimal_schedules}
\end{figure}

Compared to the stochastic transit schedule shown in Figure \ref{fig:stochastic_optimal_schedules}, the robust transit schedule utilizes one more bus over pattern X49. Meanwhile, more buses are dispatched during the first hour from the terminal. In summary, the robust transit schedule has a competitive performance over the stochastic transit schedule. Robust transit schedules can be adopted when vehicles are crowded and passengers prefer less in-vehicle travel times. The uncertain parameter $\Gamma$ in the model needs to be selected carefully to reflect the actual demand uncertainty. Advanced data-driven robust optimization approach with the ability to automatically select uncertain parameter $\Gamma$ can be further introduced~\cite{Bertsimas2018}.

\begin{figure}[!h]
    \centering
    \includegraphics[width=0.7\textwidth]{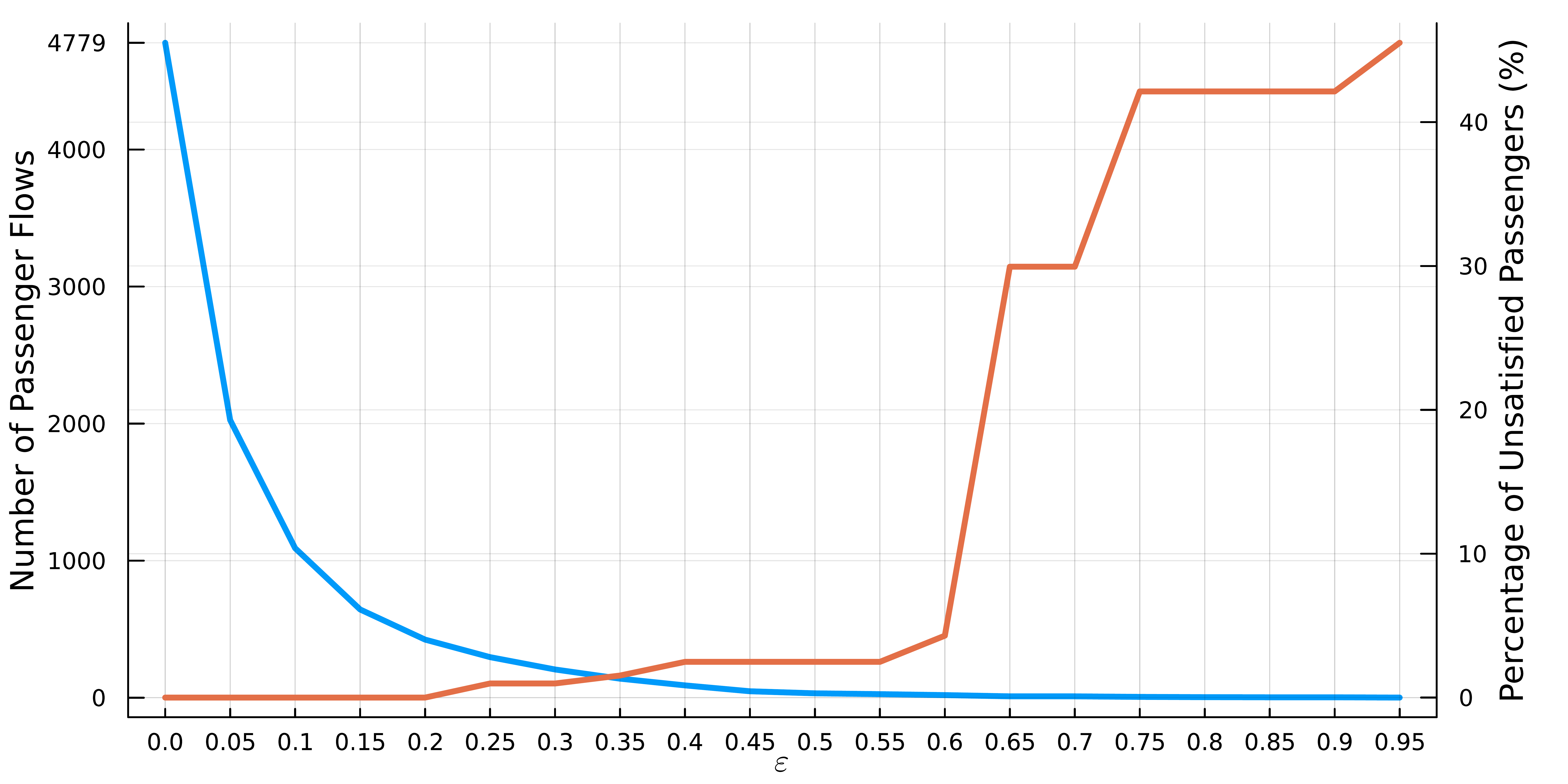}
    \caption{Sensitivity analyses for parameter $\epsilon$ in the heuristic-based dimensionality reduction approach. Y-axis on the left represents the number of distinct passenger flows in $\mathcal{F}$, i.e., $|\mathcal{F}|$. Y-axis on the right indicates the percentage of unsatisfied passengers.}
    \label{fig:heuristic_sensitivity_analyses}
\end{figure}

Sensitivity analyses of the heuristic parameter $\epsilon$ are shown in Figure \ref{fig:heuristic_sensitivity_analyses}. When the value of $\epsilon$ increases, the number of passenger flows has an exponential decrease. With fewer passenger flows considered, the robust counterpart introduces fewer constraints and variables, therefore, robust transit frequency setting problems are easier to solve. On the other hand, fewer passenger flows lead to more unsatisfied passengers with the optimized transit schedule. Regarding the percentage of unsatisfied passengers for the optimized schedule, it indicates that some passengers are not able to board a transit vehicle which is departed from the terminal station during the studied time period. In practice, unsatisfied passengers suffer longer wait times as they can board vehicles that depart from the terminal station later. In summary, robust transit schedules generated with a higher value of $\epsilon$ lead to excessive wait times by passengers. This sensitivity analysis echoes the Proposition \ref{prop:finite_loss_bound} where the objective loss monotonically increases when $\epsilon$ increases. 

\section{Conclusions and future work}
\label{sec:conclusion}

In this paper, two major issues are addressed when generating transit schedules: i) inherent demand uncertainties, and ii) gigantic OD matrices. To protect transit schedules against demand variations, a stochastic TFSP model and a robust TFSP model are introduced. A nominal optimization model is formulated to solve the TFSPs under a single transit line setting, and an extended model considering crowding levels on transit vehicles is proposed. To solve optimization problems efficiently given real-world transit instances, the TD approach is proposed based on the observation where transit demand matrices are sparse. We theoretically prove that the optimal objective function of the problem after TD is close to that of the original problem (i.e., the difference is bounded from above). Real-world transit lines operated by CTA are used to test the performances of transit schedules generated with proposed models compared to the current transit schedule. Both stochastic and robust transit schedules reduce wait times and in-vehicle travel times simultaneously for passengers over multiple demand scenarios. Compared to stochastic schedules, robust schedules further decrease in-vehicle travel times while increasing wait times by passengers.

The main limitation of this study is using heuristics to solve the robust TFSP model without proof of optimality. Meanwhile, the parameter controlling the size of the uncertainty set needs to be selected manually. Future studies could develop methodologies for decreasing problem sizes while maintaining a certain level of optimality loss. Data-driven approaches can be introduced to automatically select the value of uncertain parameter $\Gamma$. 

Another interesting research direction is pattern generation. Our model has the ability to select an optimal set of patterns to operate on a single transit line. However, how to generate a set of potential patterns for a single transit line can be a challenging task. Performances of different pattern generation algorithms can be evaluated through our proposed TFSP model. Meanwhile, other sources of uncertainty in transit systems can be considered when generating robust transit schedules, e.g., supply uncertainty (last-minute driver absence). Lastly, the proposed TFSP model can be extended to solve a network-level frequency setting problem with multiple transit lines.

\section{Acknowledgement}

The authors would like to thank Chicago Transit Authority (CTA) for offering data availability for this research.

\bibliographystyle{IEEEtranN}
\bibliography{IEEEabrv,reference}

\begin{thebibliography}{30}
\providecommand{\natexlab}[1]{#1}
\providecommand{\url}[1]{#1}
\csname url@samestyle\endcsname
\providecommand{\newblock}{\relax}
\providecommand{\bibinfo}[2]{#2}
\providecommand{\BIBentrySTDinterwordspacing}{\spaceskip=0pt\relax}
\providecommand{\BIBentryALTinterwordstretchfactor}{4}
\providecommand{\BIBentryALTinterwordspacing}{\spaceskip=\fontdimen2\font plus
\BIBentryALTinterwordstretchfactor\fontdimen3\font minus
  \fontdimen4\font\relax}
\providecommand{\BIBforeignlanguage}[2]{{%
\expandafter\ifx\csname l@#1\endcsname\relax
\typeout{** WARNING: IEEEtranN.bst: No hyphenation pattern has been}%
\typeout{** loaded for the language `#1'. Using the pattern for}%
\typeout{** the default language instead.}%
\else
\language=\csname l@#1\endcsname
\fi
#2}}
\providecommand{\BIBdecl}{\relax}
\BIBdecl

\bibitem[Ritchie and Roser(2018)]{owidurbanization}
H.~Ritchie and M.~Roser, ``Urbanization,'' \emph{Our World in Data}, 2018,
  https://ourworldindata.org/urbanization.

\bibitem[{United States Environmental Protection
  Agency}(2022)]{carbon_emission_us}
{United States Environmental Protection Agency}, ``Sources of greenhouse gas
  emissions,'' 2022,
  https://www.epa.gov/ghgemissions/sources-greenhouse-gas-emissions.

\bibitem[{American Public Transportation
  Association}(2022)]{national_ridership}
{American Public Transportation Association}, ``{APTA} ridership trends,''
  2022, https://transitapp.com/apta.

\bibitem[Barrero et~al.(2021)Barrero, Bloom, and Davis]{NBERw28731}
\BIBentryALTinterwordspacing
J.~M. Barrero, N.~Bloom, and S.~J. Davis, ``Why working from home will stick,''
  National Bureau of Economic Research, Working Paper 28731, April 2021.
  [Online]. Available: \url{http://www.nber.org/papers/w28731}
\BIBentrySTDinterwordspacing

\bibitem[Ibarra-Rojas et~al.(2015)Ibarra-Rojas, Delgado, Giesen, and
  Muñoz]{Ibarra_2015}
O.~Ibarra-Rojas, F.~Delgado, R.~Giesen, and J.~Muñoz, ``Planning, operation,
  and control of bus transport systems: A literature review,''
  \emph{Transportation Research Part B: Methodological}, vol.~77, p. 38–75,
  Jul 2015.

\bibitem[Farahani et~al.(2013)Farahani, Miandoabchi, Szeto, and
  Rashidi]{Farahani2013}
\BIBentryALTinterwordspacing
R.~Z. Farahani, E.~Miandoabchi, W.~Y. Szeto, and H.~Rashidi, ``{A review of
  urban transportation network design problems},'' \emph{European Journal of
  Operational Research}, vol. 229, no.~2, pp. 281--302, 2013. [Online].
  Available: \url{http://dx.doi.org/10.1016/j.ejor.2013.01.001}
\BIBentrySTDinterwordspacing

\bibitem[Ceder and Ceder(2007)]{Ceder2007}
A.~Ceder and A.~Ceder, \emph{{Public Transit Planning and Operation}}, 2007.

\bibitem[Newell(1971)]{Newell1971}
G.~F. Newell, ``{Dispatching Policies for a Transportation Route},''
  \emph{Transportation Science}, vol.~5, no.~1, pp. 91--105, 1971.

\bibitem[Furth and Wilson(1981)]{Furth1981}
P.~G. Furth and N.~H. Wilson, ``{Setting Frequencies on Bus Routes: Theory and
  Practice.}'' \emph{Transportation Research Record}, pp. 1--7, 1981.

\bibitem[Verbas and Mahmassani(2013)]{Verbas2013}
I.~Verbas and H.~Mahmassani, ``{Optimal allocation of service frequencies over
  transit network routes and time periods},'' \emph{Transportation Research
  Record}, no. 2334, pp. 50--59, 2013.

\bibitem[Verbas et~al.(2015)Verbas, Frei, Mahmassani, and Chan]{Verbas2015}
{\"{O}}.~Verbas, C.~Frei, H.~S. Mahmassani, and R.~Chan, ``{Stretching
  resources: sensitivity of optimal bus frequency allocation to stop-level
  demand elasticities},'' \emph{Public Transport}, vol.~7, no.~1, pp. 1--20,
  2015.

\bibitem[Li et~al.(2013)Li, Xu, and He]{Li2013}
\BIBentryALTinterwordspacing
Y.~Li, W.~Xu, and S.~He, ``{Expected value model for optimizing the multiple
  bus headways},'' \emph{Applied Mathematics and Computation}, vol. 219,
  no.~11, pp. 5849--5861, 2013. [Online]. Available:
  \url{http://dx.doi.org/10.1016/j.amc.2012.11.098}
\BIBentrySTDinterwordspacing

\bibitem[Birge and Louveaux(2011)]{Birge_Louveaux_2011}
\BIBentryALTinterwordspacing
J.~R. Birge and F.~Louveaux, \emph{Introduction to Stochastic Programming},
  ser. Springer Series in Operations Research and Financial Engineering.\hskip
  1em plus 0.5em minus 0.4em\relax Springer New York, 2011. [Online].
  Available: \url{http://link.springer.com/10.1007/978-1-4614-0237-4}
\BIBentrySTDinterwordspacing

\bibitem[Ben-Tal et~al.(2009)Ben-Tal, El~Ghaoui, and
  Nemirovskiĭ]{Ben-Tal_2009}
A.~Ben-Tal, L.~El~Ghaoui, and A.~S. Nemirovskiĭ, \emph{Robust optimization},
  ser. Princeton series in applied mathematics.\hskip 1em plus 0.5em minus
  0.4em\relax Princeton University Press, 2009.

\bibitem[Kleywegt et~al.(2002)Kleywegt, Shapiro, and Homem-de
  Mello]{Kleywegt_Shapiro_Homem-de-Mello_2002}
A.~J. Kleywegt, A.~Shapiro, and T.~Homem-de Mello, ``The sample average
  approximation method for stochastic discrete optimization,'' \emph{SIAM
  Journal on Optimization}, vol.~12, no.~2, p. 479–502, Jan 2002.

\bibitem[Bertsimas et~al.(2018)Bertsimas, Gupta, and Kallus]{Bertsimas2018}
D.~Bertsimas, V.~Gupta, and N.~Kallus, \emph{{Data-driven robust
  optimization}}.\hskip 1em plus 0.5em minus 0.4em\relax Springer Berlin
  Heidelberg, 2018, vol. 167, no.~2.

\bibitem[Gorissen et~al.(2015)Gorissen, Yanıkoğlu, and Hertog]{Gorissen_2015}
B.~L. Gorissen, I.~Yanıkoğlu, and D.~d. Hertog, ``A practical guide to robust
  optimization,'' \emph{Omega}, vol.~53, p. 124–137, Jun 2015, arXiv:
  1501.02634.

\bibitem[Yan et~al.(2013)Yan, Liu, Meng, and Jiang]{Yan2013}
Y.~Yan, Z.~Liu, Q.~Meng, and Y.~Jiang, ``{Robust optimization model of bus
  transit network design with stochastic travel time},'' \emph{Journal of
  Transportation Engineering}, vol. 139, no.~6, pp. 625--634, 2013.

\bibitem[Mo et~al.(2022)Mo, Koutsopoulos, Shen, and Zhao]{Mo2022}
\BIBentryALTinterwordspacing
B.~Mo, H.~N. Koutsopoulos, M.~Z.-J. Shen, and J.~Zhao, ``{Robust Path
  Recommendations During Public Transit Disruptions Under Demand
  Uncertainty},'' 2022. [Online]. Available:
  \url{http://arxiv.org/abs/2201.01437}
\BIBentrySTDinterwordspacing

\bibitem[Guo et~al.(2021)Guo, Caros, and Zhao]{GUO2021161}
X.~Guo, N.~S. Caros, and J.~Zhao, ``Robust matching-integrated vehicle
  rebalancing in ride-hailing system with uncertain demand,''
  \emph{Transportation Research Part B: Methodological}, vol. 150, pp.
  161--189, 2021.

\bibitem[Guo et~al.(2022)Guo, Wang, and Zhao]{GUO2022}
X.~Guo, Q.~Wang, and J.~Zhao, ``Data-driven vehicle rebalancing with predictive
  prescriptions in the ride-hailing system,'' \emph{IEEE Open Journal of
  Intelligent Transportation Systems}, vol.~3, pp. 251--266, 2022.

\bibitem[Bertsimas and Kallus(2020)]{Dimitris2020}
D.~Bertsimas and N.~Kallus, ``From predictive to prescriptive analytics,''
  \emph{Management Science}, vol.~66, no.~3, pp. 1025--1044, 2020.

\bibitem[{Transportation Research Board}(2013)]{TRB_TCQS}
{Transportation Research Board}, \emph{{Transit Capacity and Quality of Service
  Manual, Third Edition}}, 2013.

\bibitem[Mo et~al.(2020)Mo, Ma, Koutsopoulos, and Zhao]{mo2020capacity}
B.~Mo, Z.~Ma, H.~N. Koutsopoulos, and J.~Zhao, ``Capacity-constrained network
  performance model for urban rail systems,'' \emph{Transportation Research
  Record}, vol. 2674, no.~5, pp. 59--69, 2020.

\bibitem[Bertsimas and Sim(2004)]{bertsimas2004price}
D.~Bertsimas and M.~Sim, ``The price of robustness,'' \emph{Operations
  Research}, vol.~52, no.~1, pp. 35--53, 2004.

\bibitem[Bertsimas and den Hertog(2020)]{Bertsimas2020}
D.~Bertsimas and D.~den Hertog, \emph{Robust and adaptive optimization}.\hskip
  1em plus 0.5em minus 0.4em\relax Belmont, Massachusetts: Dynamic Ideas LLC,
  2020.

\bibitem[{Gurobi Optimization, LLC}(2022)]{gurobi}
\BIBentryALTinterwordspacing
{Gurobi Optimization, LLC}, ``{Gurobi Optimizer Reference Manual},'' 2022.
  [Online]. Available: \url{https://www.gurobi.com}
\BIBentrySTDinterwordspacing

\bibitem[Sánchez-Martínez(2017)]{ODX_Gabriel}
\BIBentryALTinterwordspacing
G.~E. Sánchez-Martínez, ``Inference of public transportation trip
  destinations by using fare transaction and vehicle location data: Dynamic
  programming approach,'' \emph{Transportation Research Record}, vol. 2652,
  no.~1, pp. 1--7, 2017. [Online]. Available:
  \url{https://doi.org/10.3141/2652-01}
\BIBentrySTDinterwordspacing

\bibitem[Caros et~al.(0)Caros, Guo, Stewart, Attanucci, Smith, Nioras,
  Gartsman, and Zimmer]{ROVE_2022}
\BIBentryALTinterwordspacing
N.~S. Caros, X.~Guo, A.~Stewart, J.~Attanucci, N.~Smith, D.~Nioras,
  A.~Gartsman, and A.~Zimmer, ``Ridership and operations visualization engine:
  An integrated transit performance and passenger journey visualization
  engine,'' \emph{Transportation Research Record}, vol.~0, no.~0, p.
  03611981221103232, 0. [Online]. Available:
  \url{https://doi.org/10.1177/03611981221103232}
\BIBentrySTDinterwordspacing

\bibitem[Zhao et~al.(2007)Zhao, Rahbee, and Wilson]{ODX_Zhao}
\BIBentryALTinterwordspacing
J.~Zhao, A.~Rahbee, and N.~H.~M. Wilson, ``Estimating a rail passenger trip
  origin-destination matrix using automatic data collection systems,''
  \emph{Computer-Aided Civil and Infrastructure Engineering}, vol.~22, no.~5,
  pp. 376--387, 2007. [Online]. Available:
  \url{https://onlinelibrary.wiley.com/doi/abs/10.1111/j.1467-8667.2007.00494.x}
\BIBentrySTDinterwordspacing

\end{thebibliography}

\newpage
\appendix
\section{Robust Transit Schedules} \label{appendix:robust_schedules}

\begin{figure}[!h]
    \centering
    \includegraphics[width=\textwidth]{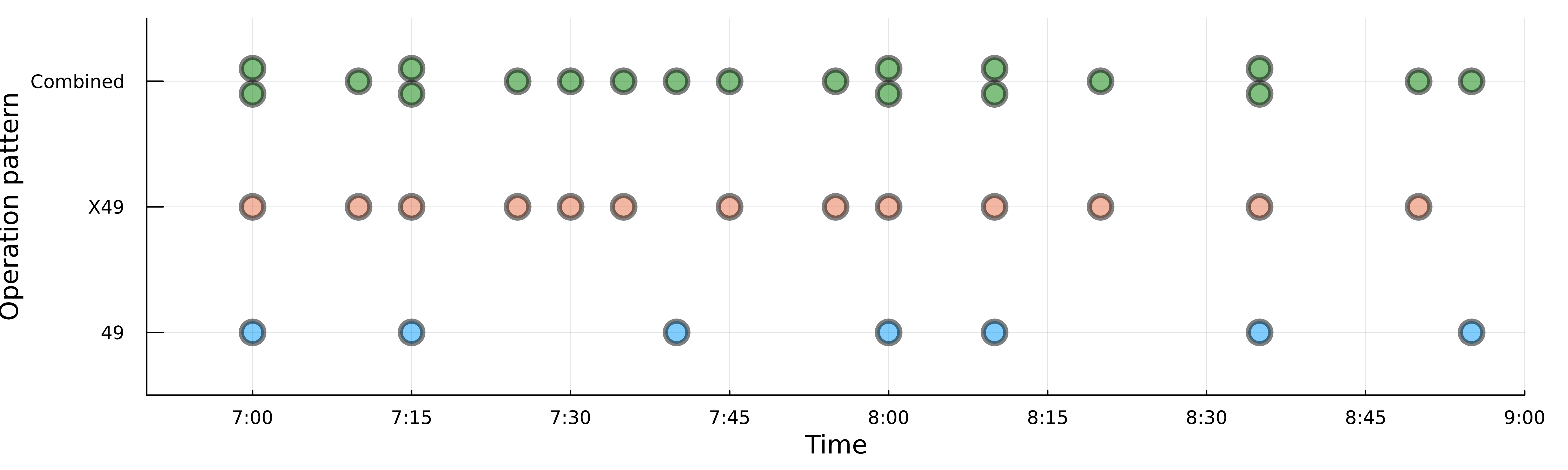}
    \caption{The robust transit schedule with $\Gamma = 0$. Each colored dot represents a departure with a specific operation pattern from the terminal stop.}
    \label{fig:robust_optimal_schedules_0}
\end{figure}

\begin{figure}[!h]
    \centering
    \includegraphics[width=\textwidth]{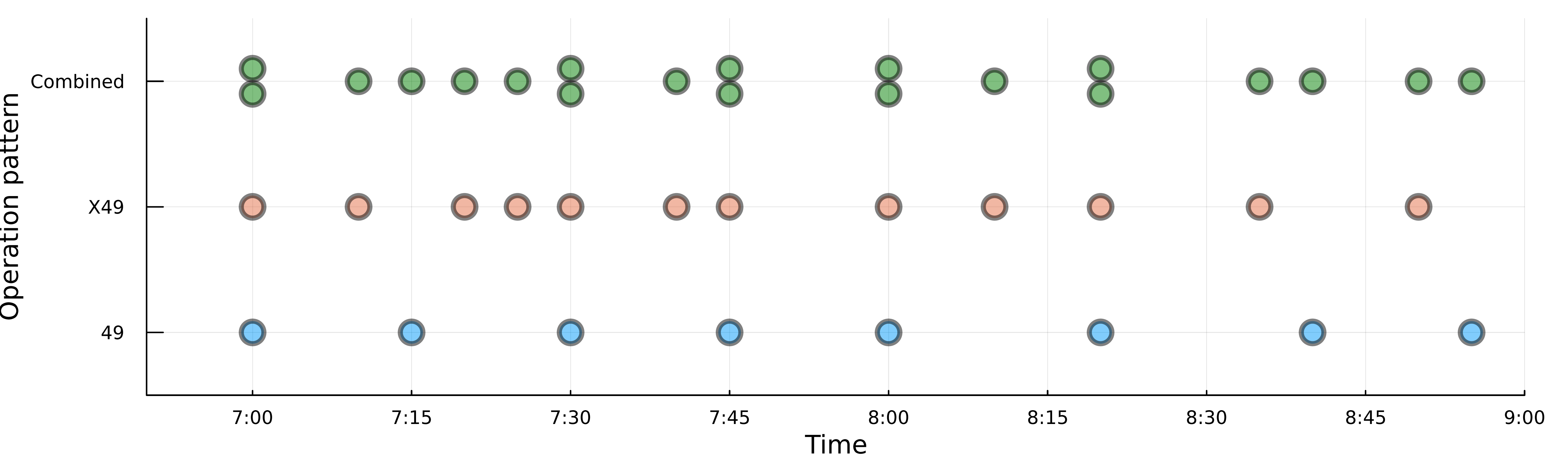}
    \caption{The robust transit schedule with $\Gamma = 1$. Each colored dot represents a departure with a specific operation pattern from the terminal stop.}
    \label{fig:robust_optimal_schedules_1}
\end{figure}

\begin{figure}[!h]
    \centering
    \includegraphics[width=\textwidth]{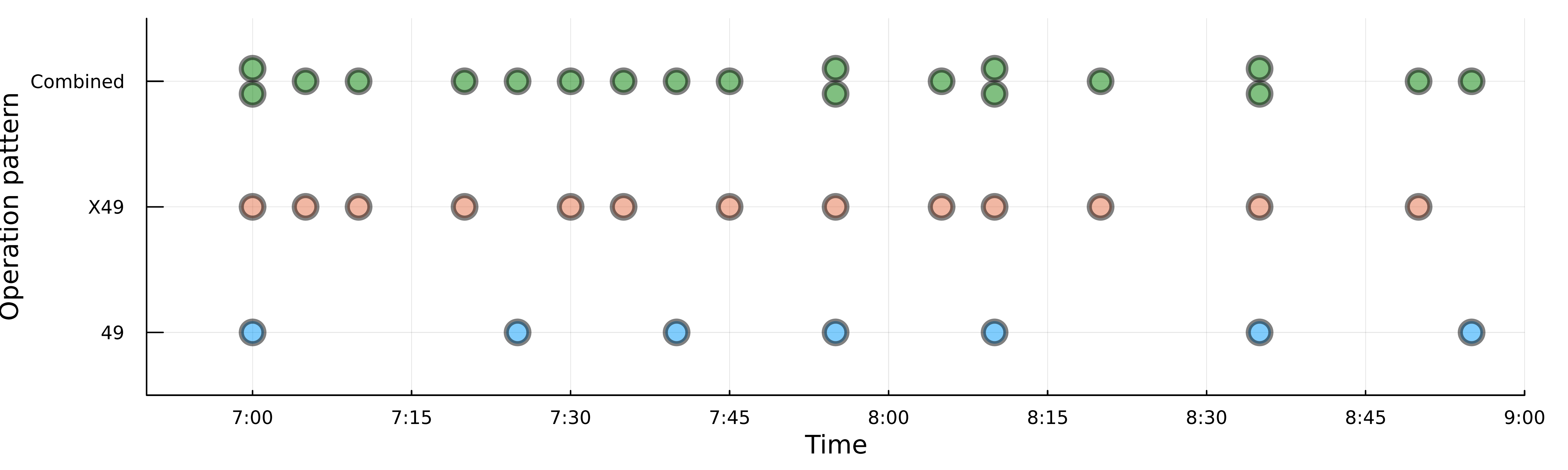}
    \caption{The robust transit schedule with $\Gamma = 2$. Each colored dot represents a departure with a specific operation pattern from the terminal stop.}
    \label{fig:robust_optimal_schedules_2}
\end{figure}

\begin{figure}[!h]
    \centering
    \includegraphics[width=\textwidth]{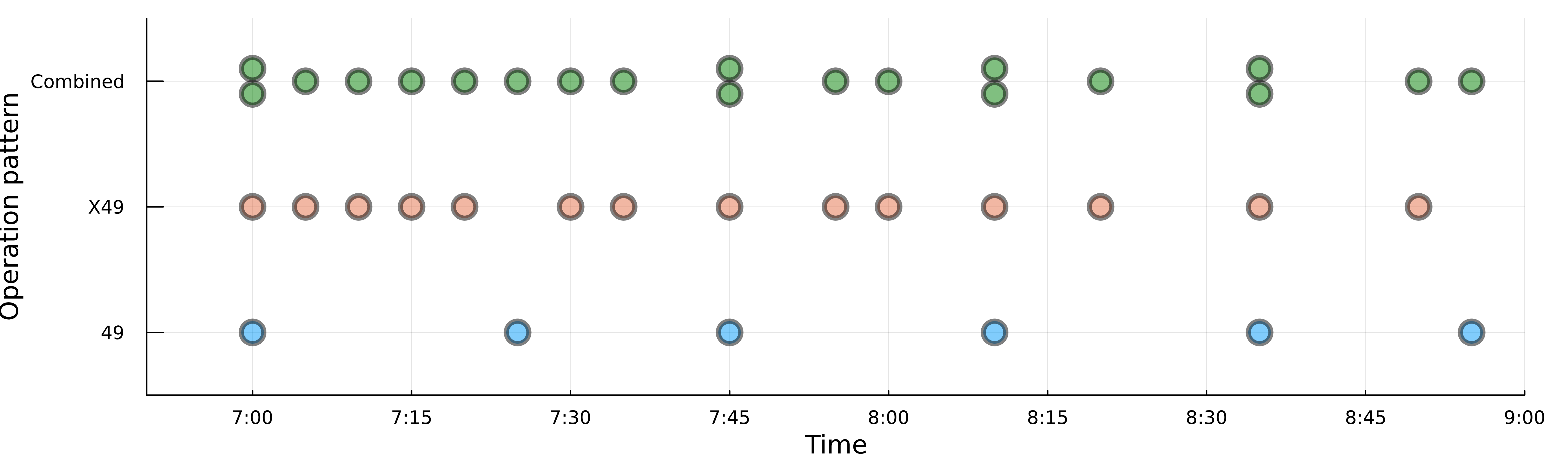}
    \caption{The robust transit schedule with $\Gamma = 3$. Each colored dot represents a departure with a specific operation pattern from the terminal stop.}
    \label{fig:robust_optimal_schedules_3}
\end{figure}

\begin{figure}[!h]
    \centering
    \includegraphics[width=\textwidth]{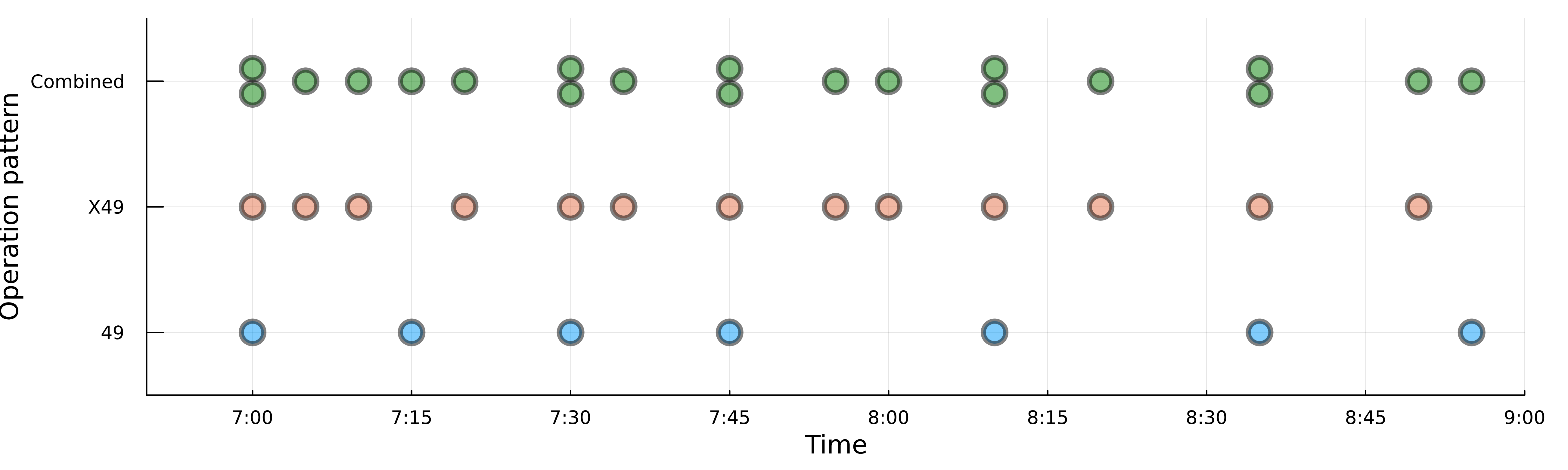}
    \caption{The robust transit schedule with $\Gamma = 4$. Each colored dot represents a departure with a specific operation pattern from the terminal stop.}
    \label{fig:robust_optimal_schedules_4}
\end{figure}

\begin{figure}[!h]
    \centering
    \includegraphics[width=\textwidth]{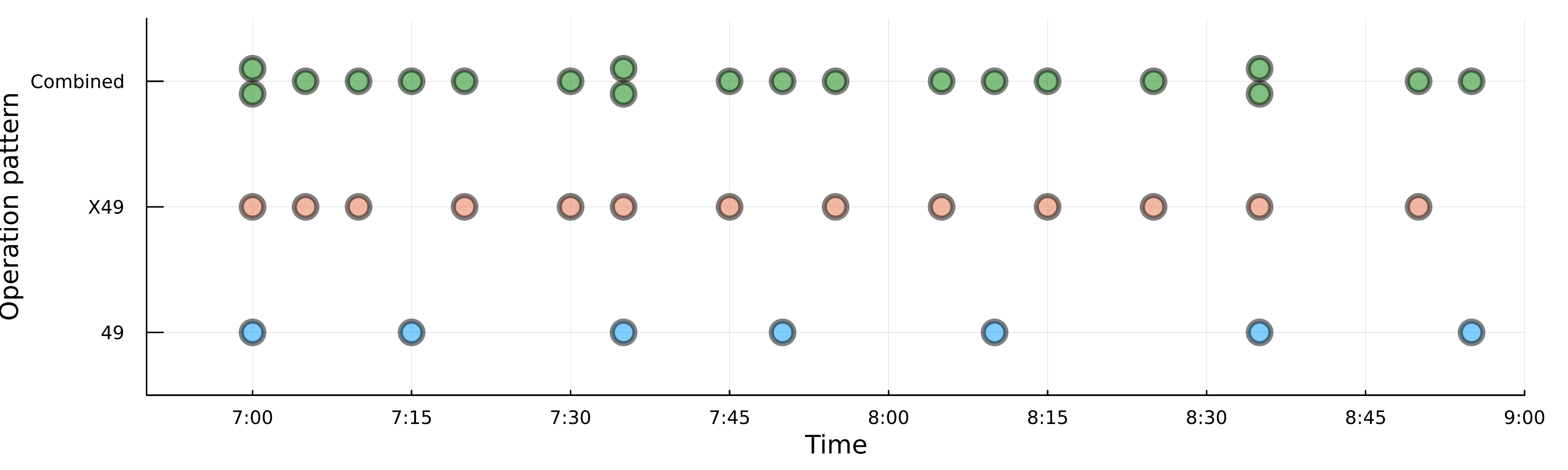}
    \caption{The robust transit schedule with $\Gamma = 5$. Each colored dot represents a departure with a specific operation pattern from the terminal stop.}
    \label{fig:robust_optimal_schedules_5}
\end{figure}

\begin{figure}[!h]
    \centering
    \includegraphics[width=\textwidth]{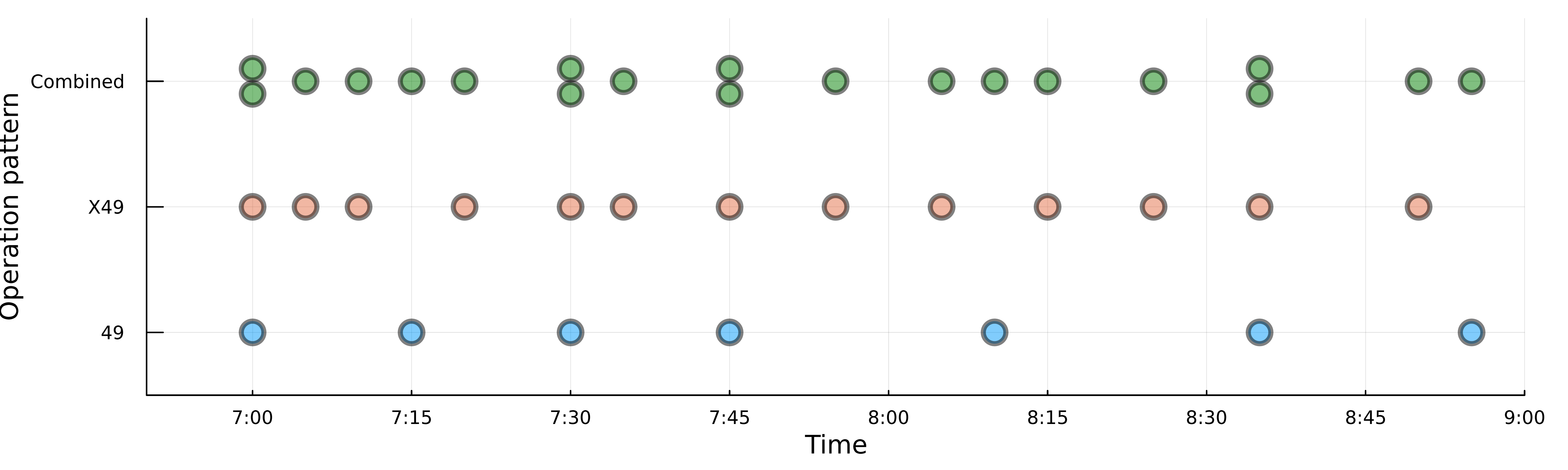}
    \caption{The robust transit schedule with $\Gamma = 6$. Each colored dot represents a departure with a specific operation pattern from the terminal stop.}
    \label{fig:robust_optimal_schedules_6}
\end{figure}

\begin{figure}[!h]
    \centering
    \includegraphics[width=\textwidth]{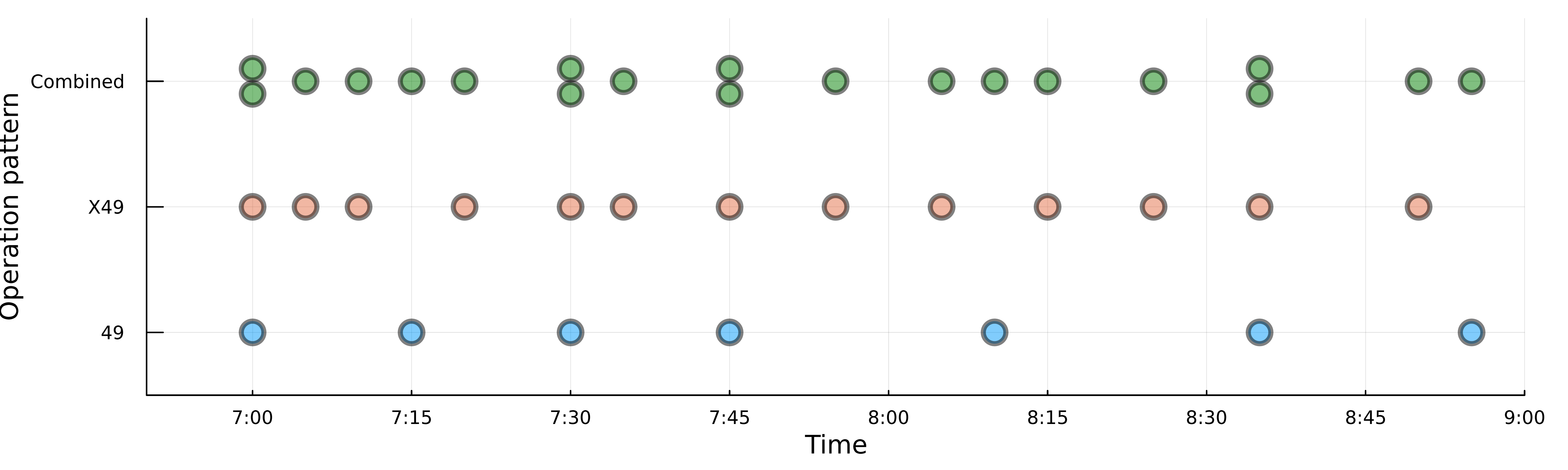}
    \caption{The robust transit schedule with $\Gamma = 7$. Each colored dot represents a departure with a specific operation pattern from the terminal stop.}
    \label{fig:robust_optimal_schedules_7}
\end{figure}

\begin{figure}[!h]
    \centering
    \includegraphics[width=\textwidth]{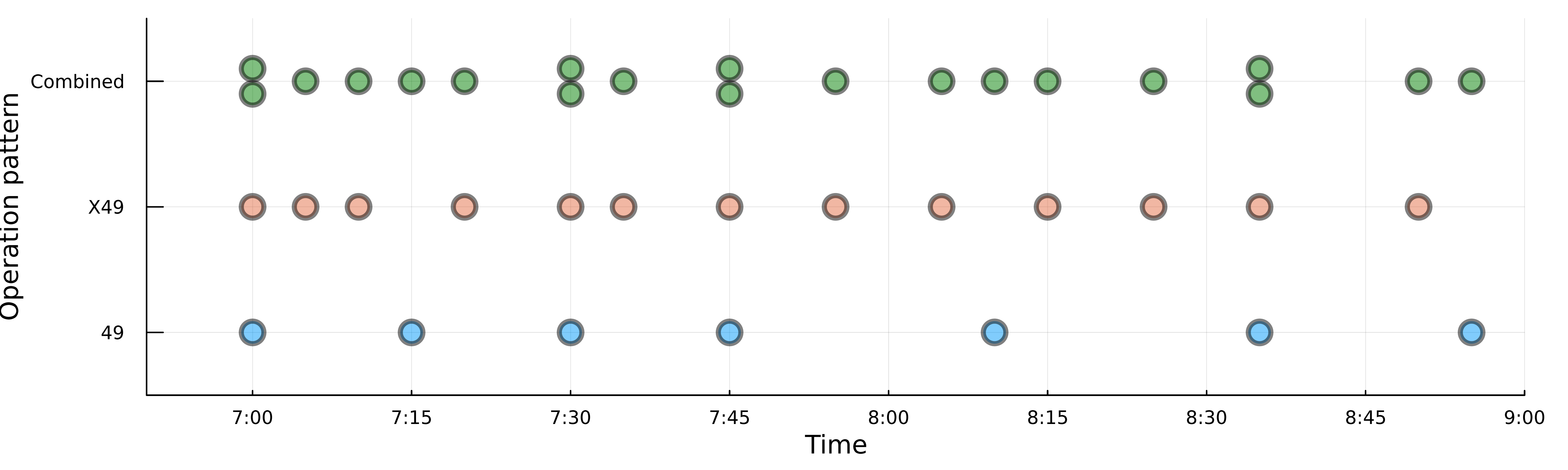}
    \caption{The robust transit schedule with $\Gamma = 8$. Each colored dot represents a departure with a specific operation pattern from the terminal stop.}
    \label{fig:robust_optimal_schedules_8}
\end{figure}

\begin{figure}[!h]
    \centering
    \includegraphics[width=\textwidth]{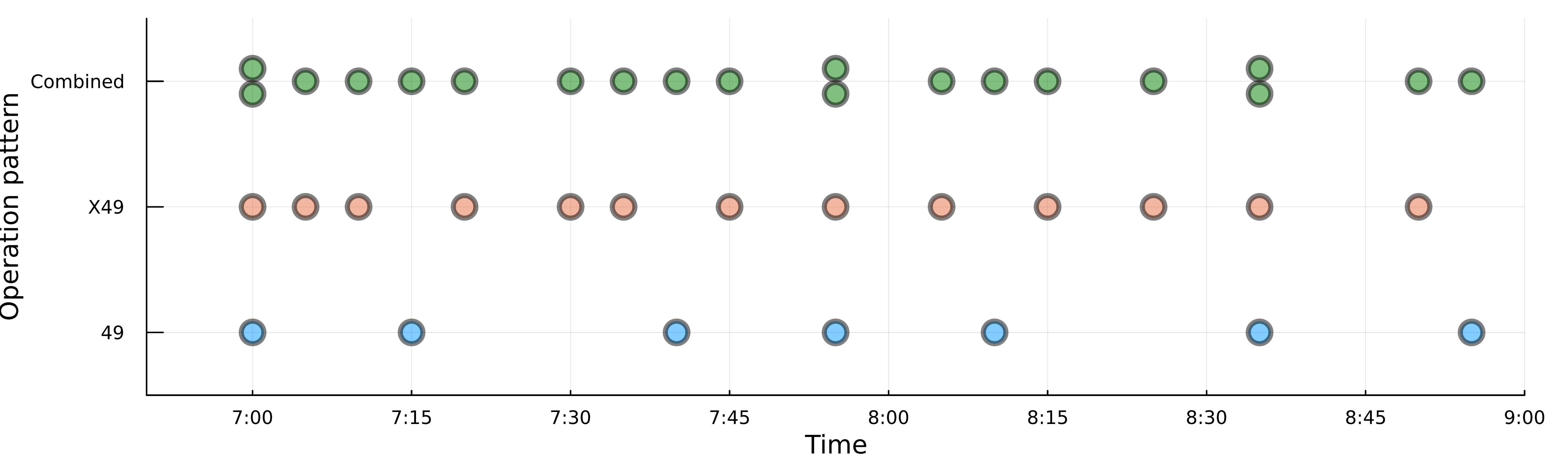}
    \caption{The robust transit schedule with $\Gamma = 9$. Each colored dot represents a departure with a specific operation pattern from the terminal stop.}
    \label{fig:robust_optimal_schedules_9}
\end{figure}

\end{document}